\documentclass[lettersize,journal]{IEEEtran}
\usepackage{amsmath,amsfonts,amsthm}
\usepackage{algorithmic}
\usepackage{algorithm}
\usepackage{array}
\usepackage[caption=false,font=normalsize,labelfont=sf,textfont=sf]{subfig}
\usepackage{textcomp}
\usepackage{stfloats}
\usepackage{url}
\usepackage{verbatim}
\usepackage{graphicx}
\usepackage{cite}

\theoremstyle{plain}
\newtheorem{thm}{Theorem}

\newtheorem{pro}[thm]{Problem}
\newtheorem{lem}[thm]{Lemma}

\theoremstyle{definition}
\newtheorem{defn}[thm]{Definition}
\newtheorem{ass}{Assumption}
\newtheorem{rmk}[thm]{Remark}

\begin{document}

\title{$H_2/H_{\infty}$ Control for Stochastic Differential Systems with Partial Observation}

\author{Changwang Xiao, Nan Yang, Qingxin Menq
\thanks{This work was supported by the National Natural Science Foundation of China (No. 12271158), the Natural Science Foundation of Zhejiang Province for Distinguished Young Scholar (No. LZ22A010005), the Natural Science Foundation of Huzhou City (No. 2023YZ46) and the Postgraduate Research and Innovation Project of Huzhou Normal University (No. 2024KYCX66). (Corresponding authors: Qinxin Meng.)}
\thanks{The authors are with Department of Mathematical Sciences, Huzhou Normal University, Zhejiang 313000, PR China (E-mail addresses: xiaocw@zjhu.edu.cn (C. Xiao), 2024389526@stu.zjhu.edu.cn (N. Yang), mqx@zjhu.edu.cn (Q. Meng)).}}

\markboth{Journal of xxxxxxxxxxxxx,,~Vol.~xx, No.~x, August~20xx}%
{Shell \MakeLowercase{\textit{et al.}}: A Sample Article Using IEEEtran.cls for IEEE Journals}

\IEEEpubid{0000--0000/00\$00.00~\copyright~2021 IEEE}

\maketitle

\begin{abstract}
	This paper investigates the $H_{2}/H_{\infty}$ control problem for linear stochastic differential systems under partial observation. Unlike existing studies that assume full state accessibility, we consider the scenario where the controller has access only to an observation process. The objective is to design a controller that balances the $H_2$ performance criterion with the $H_\infty$ robustness requirement under worst-case disturbances, formulated as a nonzero-sum differential game. Using the Kalman filtering method, we derive the corresponding optimal filtering equation. Furthermore, a Stochastic Bounded Real Lemma under the partial observation framework is established, providing necessary and sufficient conditions for the $H_\infty$ robustness constraint. We also show the connection between the existence of a Nash equilibrium and the solvability of the cross-coupled Riccati equations, and illustrate the effectiveness of the proposed approach through a numerical example involving an unmanned aerial vehicle (UAV).
\end{abstract}
\begin{IEEEkeywords}
    $H_2/H_{\infty}$ Control, Partial Observation, Filtering Equation, Stochastic Differential Games, Cross-Coupled Riccati Equations.
\end{IEEEkeywords}

\section{Introduction}\label{sec1}
\IEEEPARstart{I}{n} modern control theory and engineering practice,
achieving a balance between system performance and robustness against disturbances has always been a central pursuit. The $H_2$ control theory primarily focuses on minimizing the system's response to white noise or transient disturbances, thereby optimizing dynamic performance and energy efficiency. In contrast, $H_\infty$ control theory is designed to ensure robust stability and suppress the impact of exogenous disturbances (square-integrable signals) on the regulated output. Since real-world systems are often subject to both stochastic noises and deterministic uncertainties, a single performance index is often insufficient. Consequently, the mixed $H_2/H_\infty$ control problem, which seeks to minimize an $H_2$ performance criterion while guaranteeing a prescribed $H_\infty$ disturbance attenuation level, has garnered significant attention since its inception \cite{Bernstein1989, Doyle1989}.

The mixed $H_2/H_\infty$ control problem was initially formulated for deterministic systems. A landmark contribution by Limebeer et al. \cite{Limebeer1994} interpreted the problem as a two-player nonzero-sum differential game, where the controller and the disturbance minimize their respective cost functionals. In Limebeer's work, the Nash game-theoretic approach provides a clear framework for deriving solutions to the $H_2/H_\infty$ control problem via cross-coupled Riccati equations. Extending this framework to stochastic systems, Chen and Zhang \cite{Chen2004} investigated the stochastic $H_2/H_\infty$ control with state-dependent noise. Following this, significant progress has been made in stochastic $H_2/H_\infty$ control under the assumption of \textit{full state information}. Researchers have explored various system dynamics, including Markovian jump systems \cite{Zhang2007, Hou2013, Mei2020}, time-delay systems \cite{Gao2015, Li2018}. In recent studies, mean-field stochastic systems have been widely investigated. For instance, Wang et al. \cite{Wang2023} derived the Stochastic Bounded Real Lemma for mean-field systems and solved the $H_2/H_\infty$ problem via indefinite coupled Riccati differential equations. Furthermore, Fang et al. \cite{Fang2026} addressed the $H_2/H_\infty$ problem for linear stochastic systems with affine terms, providing explicit open-loop and closed-loop solvability conditions. In UAV control, Qi and Zhao \cite{Qi2025} proposed a hybrid $H_2/H_\infty$ control method based on LMI for quadrotor autonomous landing. Hui et al. \cite{Hui2024} applied robust $H_\infty$ control and two-stage cascaded MPC to quadrotor attitude control. Hasanlu and Siavashi \cite{Hasanlu2025} proposed a discrete $H_\infty$ control strategy for nonlinear quadrotor trajectory tracking, improving robust tracking performance.

\IEEEpubidadjcol

However, in many practical applications such as financial engineering, networked control systems, and large-scale population dynamics, full state observation is often unrealistic. Controllers usually have access only to an observation process providing partial information about the system state. This leads to \textit{stochastic optimal control under partial information}. A classical example is the Linear-Quadratic-Gaussian (LQG) problem. The standard solution relies on the separation principle \cite{Wonham1968, Davis1977}, which decouples the design of an optimal feedback controller into two independent subproblems: optimal state estimation (Kalman filtering) and optimal deterministic control (LQR). Under partial observations, the optimal control strategy retains a linear feedback structure, with the state replaced by its optimal estimate. Bensoussan \cite{Bensoussan1992} and James et al. \cite{James2002} studied stochastic control with partial observations using nonlinear filtering techniques. Huang et al. \cite{Huang2009} and Øksendal et al. \cite{Øksendal2010} investigated the stochastic maximum principle for partially observed systems. Wang et al. \cite{Wang2015} solved linear-quadratic problems for forward-backward stochastic differential equations under partial information. Recently, Sun and Xiong \cite{Sun2022} studied stochastic LQ control with partial observation, deriving an optimal control via the filtering process using orthogonal decomposition. Moon and Basar \cite{Moon2024} extended separation principles to mean-field stochastic systems. Other notable works include \cite{Wang2013, Wu2018, Li2019, Huang2020, Wang2021}.

Despite extensive literature on full-information $H_2/H_\infty$ control and partially observed LQ control, the \textit{stochastic $H_2/H_\infty$ control problem under partial information} remains largely unexplored. This problem presents unique theoretical challenges:

First, unlike the pure LQ problem in \cite{Sun2022, Moon2024}, the $H_2/H_\infty$ problem involves coupling between control and disturbance: the optimal control depends on the disturbance, while the worst-case disturbance depends on the control. This may invalidate the classical certainty equivalence principle. In this paper, we show that a modified separation-type structure can still be obtained via orthogonal decomposition.

Second, the partial information constraint prevents direct use of state-feedback Nash equilibrium strategies from \cite{Chen2004, Wang2023}, requiring the controller to rely on the filtration generated by the observation process.

Third, constructing a stabilizing observer-based controller satisfying both $H_\infty$ robustness and $H_2$ optimality requires solving filtering equations and coupled Riccati equations simultaneously, which is more complex than in the full-information case.

Before proceeding, we provide a motivating aerospace example: designing a quadrotor UAV controller that balances tracking accuracy and robustness against atmospheric disturbances. Following \cite{Shi2018,Qi2025,Hui2024,Hasanlu2025,Ming2023}, consider the linearized longitudinal dynamics. Let $x(t)$ denote the state vector. The system follows
\begin{equation*}
	\left\{
	\begin{aligned}
		&dx(t) = [Ax(t) + B_1 v(t) + B_2 u(t) + b(t)]\mathrm{d}t + C \mathrm{d}W(t),\\
		&x(0) = x_0,
	\end{aligned}
	\right.
\end{equation*}
where $u(t)$ is the control input (thrust and pitch torque), $v(t)$ the worst-case disturbance (e.g., wind gusts), $b(t)$ an affine term capturing constant mismatches, and $C \mathrm{d}W(t)$ the stochastic noise. Partial observations are modeled as
\begin{equation*}
	\left\{
	\begin{aligned}
		&dy(t) = [Ex(t) + \beta(t)]\mathrm{d}t + F \mathrm{d}W(t),\\
		&y(0) = 0,
	\end{aligned}
	\right.
\end{equation*}
with $\beta(t)$ simulating sensor biases. The control objectives are: ensure an $H_\infty$ attenuation level $\gamma>0$:
\begin{equation*}
	J_1(0, x_0; u(\cdot), v(\cdot)) := \mathbb{E} \int_{0}^{T} [\gamma^2 |v(t)|^2 - |z(t)|^2]\mathrm{d}t,
\end{equation*}
while minimizing the $H_2$ cost
\begin{equation*}
	J_2(0, x_0; u(\cdot), v(\cdot)) := \mathbb{E} \int_{0}^{T} |z(t)|^2 \mathrm{d}t,
\end{equation*}
with regulated output
\[
z(t) = \begin{pmatrix} Q(t)x(t)\\ N_1(t)u(t) \end{pmatrix}.
\]

To the best of our knowledge, no unified framework combines stochastic $H_2/H_\infty$ differential games with filtering for general linear stochastic systems. Existing partial information studies mainly address a single cost or pure $H_\infty$, rarely the mixed $H_2/H_\infty$ trade-off. Motivated by this, this paper investigates the stochastic $H_2/H_\infty$ problem under partial information. The main contributions are:

\begin{enumerate}
	\item Formulation of a partially observed $H_2/H_\infty$ problem as a nonzero-sum differential game subject to a filtration constraint.
	\item Integration of stochastic filtering with the differential game framework to establish a separation-like characterization of the Nash equilibrium under partial observation.
	\item Provision of sufficient conditions for the existence of the optimal control via coupled differential Riccati equations, extending results to partial information scenarios.
\end{enumerate}

The remainder of the paper is organized as follows: Section \ref{sec2} introduces notation and formulates the nonzero-sum differential game. Section \ref{sec3} derives the optimal filtering equations. Section \ref{sec4} presents a stochastic bounded real lemma under partial observation, providing necessary and sufficient $H_\infty$ robustness conditions. Section \ref{sec5} characterizes the closed-loop solvability via coupled Riccati equations. Section \ref{sec6} presents a numerical example, and Section \ref{sec7} concludes the paper.

\section{Preliminaries}\label{sec2}

\subsection{Notations}
In this paper, let $T>0$ be fixed, and let $(\Omega,\mathcal{F},\mathbb{P})$ be a complete probability space augmented by all the $\mathbb{P}$-null sets in $\mathcal{F}$. Let ${W=\left(W_{1}, \ldots, W_{r}\right)^{\top}}$ and ${\widetilde{W}=(\widetilde{W}_{1}, \ldots, \widetilde{W}_{p})^{\top}}$ be two mutually independent standard Brownian motions with values in $\mathbb{R}^r$ and $\mathbb{R}^p$, respectively, defined on $(\Omega, \mathcal{F}, \mathbb{P})$. Let \( \mathbb{F}:=\left\{\mathcal{F}_{t}\right\}_{0 \leq t \leq T} \) be the natural filtration generated by \( (W, \widetilde{W}) \) satisfying the usual conditions. Then \( (\Omega, \mathcal{F}, \mathbb{F}, \mathbb{P}) \) becomes a complete filtered probability space. Let $\mathbb{E}$ be the mathematical expectation with respect to $\mathbb{P}$. Denote by $\mathbb{R}^n$ the $n$-dimensional Euclidean space, $\mathbb{R}^{n\times m}$ the set of all real matrices, $\mathbb{S}^n$ the space of real symmetric $n \times n$ matrices, and $I_n$ the identity matrix in $\mathbb{R}^n$. Moreover, $A^\top$ and $A^{-1}$ denote the transpose and the inverse of a matrix $A$, respectively. For \( M, N \in \mathbb{R}^{n \times m} \), let \( \langle M, N\rangle = \operatorname{tr}(M^{\top} N) \) be the inner product and \( |M| = \sqrt{\operatorname{tr}(M^{\top} M)} \) the induced norm on \( \mathbb{R}^{n\times m} \).

For any Euclidean space $H$, we introduce the following spaces:
\begin{enumerate}
	\item $C([0,T];H)$: The space of continuous functions $X: [0,T] \rightarrow H$ satisfying
	\begin{equation*}
		\left\|X\right\|_{C([0,T];H)} := \sup_{t \in [0,T]}\left\|X(t)\right\|_{H} < \infty.
	\end{equation*}
	
	\item $L^2(\Omega,\mathcal{F},\mathbb{P};H)$: The space of square-integrable $\mathcal{F}$-measurable random variables $\xi: \Omega \rightarrow H$ satisfying
	\begin{equation*}
		\mathbb{E}[\|\xi\|_H^2] < \infty.
	\end{equation*}
	
	\item $L_{\mathbb{F}}^2([0,T];H)$: The space of $\mathbb{F}$-adapted stochastic processes $X: [0,T]\times \Omega \rightarrow H$ satisfying
	\begin{equation*}
		\left\|X\right\|_{L_{\mathbb{F}}^2([0,T];H)}^2 := \mathbb{E}\int_{0}^{T} \left\|X(t)\right\|_H^2 \mathrm{d}t < \infty.
	\end{equation*}
	
	\item $L_{\mathbb{F}}^2(\Omega;C([0,T];H))$: The space of $\mathbb{F}$-adapted continuous stochastic processes $X: [0,T] \times \Omega \rightarrow H$ satisfying
	\begin{equation*}
		\left\|X\right\|_{L_{\mathbb{F}}^2(\Omega;C([0,T];H))}^2 := \mathbb{E}\left[\sup_{t \in [0,T]} \left\|X(t)\right\|_H^2\right] < \infty.
	\end{equation*}
\end{enumerate}

\subsection{Problem Statement}
We consider the $\mathbb{R}^n$-valued linear stochastic differential equation (SDE) driven by two Brownian motions on $[0,T]$:
\begin{equation}\label{eq:2.1}
	\left\{
	\begin{aligned}
		\mathrm{d}x(t) =& \Big\{ A(t)x(t) + B_1(t)v(t) + B_2(t)u(t) + b(t) \Big\} \mathrm{d}t \\
		& + C(t)\mathrm{d}W(t) + D(t)\mathrm{d}\widetilde{W}(t),\\
		x(0)=&x_0,\quad 
		z(t)=
		\begin{pmatrix}
			Q(t)x(t)\\
			N_1(t)u(t)
		\end{pmatrix},\quad t \in [0,T].
	\end{aligned}
	\right.
\end{equation}
Here $x(t): [0,T] \times \Omega \rightarrow \mathbb{R}^n$ is the state process, which evolves under the control input $u(t): [0,T] \times \Omega \rightarrow \mathbb{R}^s$ and the exogenous disturbance $v(t): [0,T] \times \Omega \rightarrow \mathbb{R}^m$. The initial condition is $x_0 \in L^2(\Omega,\mathcal{F}_0,\mathbb{P};\mathbb{R}^n)$, and $z(t)$ represents the measured system output, which is related to the state $x(t)$ and the control input $u(t)$.

In many practical scenarios, due to the influence of environmental noise, only partial information on the state is available. Accordingly, the observation process is described by the following SDE:
\begin{equation}\label{eq:2.2}
	\left\{
	\begin{aligned}
		\mathrm{d}y(t) &= \Big\{ E(t)x(t) + \beta(t) \Big\}\mathrm{d}t + F(t)\mathrm{d}W(t),\\
		y(0) &= 0,
	\end{aligned}
	\right.
\end{equation}
where $y(t):[0,T]\times \Omega \rightarrow \mathbb{R}^r$ is the observation process and $y(0) = 0$ is the initial condition.

Throughout this paper we impose the following assumptions.

\begin{ass}\label{ass:2.1}
	The coefficient functions $A(\cdot) : [0,T] \rightarrow \mathbb{R}^{n \times n}$, $B_1(\cdot) : [0,T] \rightarrow \mathbb{R}^{n \times m}$, $B_2(\cdot) : [0,T] \rightarrow \mathbb{R}^{n \times s}$, $E(\cdot) : [0,T] \rightarrow \mathbb{R}^{r \times n}$ as well as the matrix-valued functions $Q(\cdot) : [0,T] \rightarrow \mathbb{R}^{n \times n}$ and $N_1(\cdot) : [0,T] \rightarrow \mathbb{R}^{s \times s}$ are deterministic and uniformly bounded on $[0,T]$.
\end{ass}

\begin{ass}\label{ass:2.2}
	The affine terms $b(\cdot) : [0,T] \rightarrow \mathbb{R}^{n}$, $C(\cdot) : [0,T] \rightarrow \mathbb{R}^{n \times r}$, $D(\cdot) : [0,T] \rightarrow \mathbb{R}^{n \times p}$, $\beta(\cdot) : [0,T] \rightarrow \mathbb{R}^{r}$, and $F(\cdot) : [0,T] \rightarrow \mathbb{R}^{r \times r}$ are deterministic and uniformly bounded on $[0,T]$.
\end{ass}

\begin{ass}\label{ass:2.3}
	The function $F(\cdot)$ is invertible for a.e. $t \in [0,T]$, and $F(\cdot)^{-1}$ is bounded on $[0,T]$.
\end{ass}

\begin{ass}\label{ass:2.4}
	The matrix $N_1(\cdot)$ satisfies
	\[
	N_1(t)^{\top}N_1(t) = I_s,\qquad \forall t \in [0,T].
	\]
\end{ass}

Let $\mathbb{Y} = \{\mathcal{Y}_t\}_{0 \le t \le T}$ be the natural filtration generated by the observation process $y(t)$, augmented in the usual way. Now, we define the sets of admissible control and admissible disturbance for state equation \eqref{eq:2.1} and observation equation \eqref{eq:2.2}:
\begin{equation*}
	\mathcal{U}^{\mathbb{Y}}:= L_{\mathbb{Y}}^2([0,T];\mathbb{R}^s),
	\qquad
	\mathcal{V}^{\mathbb{Y}}:= L_{\mathbb{Y}}^2([0,T];\mathbb{R}^m).
\end{equation*}

\begin{rmk}\label{rmk:2.1}
	Suppose that Assumptions \ref{ass:2.1}--\ref{ass:2.4} hold. Then, for any initial condition $x(0) = x_0 \in L^2(\Omega,\mathcal{F}_0,\mathbb{P};\mathbb{R}^n)$ and any control pair $(u(\cdot),v(\cdot)) \in \mathcal{U}^{\mathbb{Y}} \times \mathcal{V}^{\mathbb{Y}}$, the state equation \eqref{eq:2.1} and observation equation \eqref{eq:2.2} admit unique strong solutions $x(\cdot) := x^{x_0,u,v}(\cdot) \in L_{\mathbb{F}}^2(\Omega;C([0,T];\mathbb{R}^n))$ and $y(\cdot) \in L_{\mathbb{F}}^2(\Omega;C([0,T];\mathbb{R}^r))$, respectively. For convenience, we use the notation $x^{0,u,v}$ to represent the solution of system \eqref{eq:2.1} with zero initial value (i.e. $x(0) = 0$), and $x^{x_0,0,v}$ (resp. $x^{x_0,u,0}$) to denote the solution of system \eqref{eq:2.1} with $u(\cdot)=0$ (resp. $v(\cdot)=0$).
\end{rmk}

Different from the case of complete information, we now formally introduce the definition of the stochastic $H_2/H_\infty$ control problem with partial observation.

\begin{defn}\label{defn:2.2}
	Consider the controlled SDE \eqref{eq:2.1} on $[0,T]$, with a prescribed disturbance attenuation level $\gamma > 0$. Define the admissible closed-loop control strategy spaces as
	\begin{equation*}
		\begin{aligned}
			&\mathcal{N}^2[0,T] := C([0,T];\mathbb{R}^{s \times n}) \times \mathcal{U}^{\mathbb{Y}},\\
			&\mathcal{M}^2[0,T] := C([0,T];\mathbb{R}^{m \times n}) \times \mathcal{V}^{\mathbb{Y}}.
		\end{aligned}
	\end{equation*}
	
	We say that the $H_2/H_{\infty}$ problem under partial information admits an optimal closed-loop solution $(u^*(\cdot),v^*(\cdot))$, if there exists a pair of control strategies
	\begin{equation*}
		(U(\cdot),U_0(\cdot);V(\cdot),V_0(\cdot)) \in \mathcal{N}^2[0,T] \times \mathcal{M}^2[0,T],
	\end{equation*}
	such that
	\begin{equation}\label{eq:2.3}
		\begin{aligned}
			u^*(\cdot) &= U(\cdot)\widehat{x^*}(\cdot) + U_0(\cdot) \in \mathcal{U}^{\mathbb{Y}},\\
			v^*(\cdot) &= V(\cdot)\widehat{x^*}(\cdot) + V_0(\cdot) \in \mathcal{V}^{\mathbb{Y}},
		\end{aligned}
	\end{equation}
	where $x^*(\cdot)$ is the state process corresponding to the control pair $(u^*(\cdot),v^*(\cdot))$, and $\widehat{x^*}(\cdot)$ is given by
	\[
	\widehat{x^*}(t) = \mathbb{E}[x^*(t)\mid \mathcal{Y}_t], \qquad t \in [0,T].
	\]
	Moreover, the following two conditions hold.
	
	\medskip
	\noindent
	(1) ($H_{\infty}$ Robustness) Consider system \eqref{eq:2.1} with control input
	\[
	u(t)=U(t)\widehat{x}(t)+U_0(t),
	\]
	and define the linear operator $L : \mathcal{V}^{\mathbb{Y}} \rightarrow L_\mathbb{F}^2([0,T];\mathbb{R}^{n + s})$ by
	\[
	L(v)(t) := \widetilde{z}(t) =
	\begin{pmatrix}
		Q(t)\big[x^{x_0,U\widehat{x},v}(t) - x^{x_0,0,0}(t)\big]\\
		N_1(t)U(t)\big[\widehat{x}^{x_0,U\widehat{x},v}(t) - \widehat{x}^{x_0,0,0}(t)\big]
	\end{pmatrix},
	\]
	where the same affine term $U_0(\cdot)$ is used in the compared trajectories and hence is canceled in the difference. The operator norm $\|L\|$ satisfies
	\begin{equation*}
		\|L\| := \sup_{\substack{v(\cdot) \neq 0 \\ v(\cdot) \in \mathcal{V}^{\mathbb{Y}}}} \frac{\|\widetilde{z}\|_{[0,T]}}{\|v\|_{[0,T]}} < \gamma,
	\end{equation*}
	where
	\begin{equation*}
		\begin{aligned}
			\|v\|_{[0,T]} &:= \Big(\mathbb{E}\int_0^T |v(t)|^2 \mathrm{d}t\Big)^{\frac{1}{2}},\\
			\|\widetilde{z}\|_{[0,T]} &:= \Bigg(\mathbb{E}\int_0^T \Big\{|Q(t)\big[x^{x_0,U\widehat{x},v}(t) - x^{x_0,0,0}(t)\big]|^2 \\
			&\quad + |N_1(t)U(t)\big[\widehat{x}^{x_0,U\widehat{x},v}(t) - \widehat{x}^{x_0,0,0}(t)\big]|^2 \Big\} \mathrm{d}t \Bigg)^{\frac{1}{2}}.
		\end{aligned}
	\end{equation*}
	
	\medskip
	\noindent
	(2) ($H_2$ Optimality) Consider system \eqref{eq:2.1} with disturbance
	\[
	v(t) = V(t)\widehat{x}(t) + V_0(t).
	\]
	The closed-loop control laws $(u^*(\cdot),v^*(\cdot))$ defined in \eqref{eq:2.3} minimize the output energy
	\begin{equation*}
		\begin{aligned}
			\left\|z\right\|^2_{[0,T]}
			&= \mathbb{E}\int_{0}^{T} |z(t)|^2 \mathrm{d}t\\
			&= \mathbb{E}\int_{0}^{T} \Big[|Q(t)x^{x_0,u,V\widehat{x}+V_0}(t)|^2 + |N_1(t)u(t)|^2\Big] \mathrm{d}t.
		\end{aligned}
	\end{equation*}
\end{defn}

To ensure conditions (1) and (2) in Definition \ref{defn:2.2}, namely that the norm of the linear operator $L$ satisfies $\|L\| < \gamma$ for some $\gamma > 0$ and that the output energy is minimized, we define the following two performance functionals:
\begin{equation*}
	\begin{aligned}
		J_1(0,x_0;u(\cdot),v(\cdot)) &:= \mathbb{E}\int_{0}^{T} [\gamma^2 |v(t)|^2 - |z(t)|^2]\mathrm{d}t,\\
		J_2(0,x_0;u(\cdot),v(\cdot)) &:= \mathbb{E}\int_{0}^{T} |z(t)|^2 \mathrm{d}t.
	\end{aligned}
\end{equation*}

Minimizing $J_1$ enforces the $H_\infty$ robustness criterion by bounding the worst-case energy gain from the disturbance $v(\cdot)$ to the output $z(\cdot)$. Meanwhile, $J_2$ characterizes the $H_2$ performance as a measure of nominal output energy, and its minimization yields optimal regulation performance and energy efficiency. In conclusion, the $H_2/H_{\infty}$ control problem with partial observation is to find a pair of strategies $(u^*(\cdot),v^*(\cdot)) \in \mathcal{U}^{\mathbb{Y}} \times \mathcal{V}^{\mathbb{Y}}$ such that the control $u^*(\cdot)$ guarantees the prescribed $H_\infty$ disturbance attenuation level $\gamma$ against any disturbance $v(\cdot)\in\mathcal{V}^{\mathbb{Y}}$, while, under the corresponding worst-case disturbance $v^*(\cdot)$ minimizing $J_1(0,x_0;u^*(\cdot),v(\cdot))$, the control $u^*(\cdot)$ also minimizes the $H_2$ cost $J_2(0,x_0;u(\cdot),v^*(\cdot))$.

From a game-theoretic perspective, we can formulate this problem as the following nonzero-sum differential game.

\begin{pro}\label{pro:2.3}
	For state equation \eqref{eq:2.1}, we seek a pair of closed-loop strategies
	\begin{equation*}
		(U(\cdot),U_0(\cdot);V(\cdot),V_0(\cdot)) \in \mathcal{N}^2[0,T] \times \mathcal{M}^2[0,T],
	\end{equation*}
	such that the optimal pair
	\begin{equation}\label{eq:2.4}
		\begin{aligned}
			(u^*(\cdot),v^*(\cdot))
			&= (U(\cdot)\widehat{x^*}(\cdot) + U_0(\cdot),V(\cdot)\widehat{x^*}(\cdot) + V_0(\cdot)) \\
			&\in \mathcal{U}^{\mathbb{Y}} \times \mathcal{V}^{\mathbb{Y}},
		\end{aligned}
	\end{equation}
	and for any initial state $x_0$, the following two inequalities are satisfied:
	
	\medskip
	\noindent
	(1) for all $v(\cdot) \in \mathcal{V}^{\mathbb{Y}}$,
	\begin{equation}\label{eq:2.5}
		J_1(0,x_0;u^*(\cdot),v^*(\cdot)) \leq J_1(0,x_0;U(\cdot)\widehat{x}(\cdot) + U_0(\cdot),v(\cdot));
	\end{equation}
	
	\medskip
	\noindent
	(2) for all $u(\cdot) \in \mathcal{U}^{\mathbb{Y}}$,
	\begin{equation}\label{eq:2.6}
		J_2(0,x_0;u^*(\cdot),v^*(\cdot)) \leq J_2(0,x_0;u(\cdot),V(\cdot)\widehat{x}(\cdot) + V_0(\cdot)).
	\end{equation}
	
	Note that $\widehat{x^*}(\cdot)$ in \eqref{eq:2.4} is the corresponding optimal estimate process of system \eqref{eq:2.1} under $(u^*(\cdot),v^*(\cdot))$, and $\widehat{x}(\cdot)$ in \eqref{eq:2.5} is the estimate process of system \eqref{eq:2.1} under the control $u(\cdot) = U(\cdot)\widehat{x}(\cdot) + U_0(\cdot) \in \mathcal{U}^{\mathbb{Y}}$ and an arbitrary disturbance $v(\cdot) \in \mathcal{V}^{\mathbb{Y}}$. Similarly, $\widehat{x}(\cdot)$ in \eqref{eq:2.6} is the estimate process of system \eqref{eq:2.1} under the disturbance $v(\cdot) = V(\cdot)\widehat{x}(\cdot) + V_0(\cdot) \in \mathcal{V}^{\mathbb{Y}}$ and an arbitrary control input $u(\cdot) \in \mathcal{U}^{\mathbb{Y}}$.
	
	A closed-loop strategy tuple $(U(\cdot),U_0(\cdot);V(\cdot),V_0(\cdot))$ that satisfies the above conditions is called a closed-loop Nash equilibrium strategy, and $(u^*(\cdot),v^*(\cdot))$ is referred to as a closed-loop Nash equilibrium.
\end{pro}

\section{Filtering Equation}\label{sec3}
In this section, we focus on deriving the filtering equations, which lay the foundation for proving the Bounded Real Lemma and solving the $H_2/H_\infty$ problem in subsequent sections.

We consider the system with control input $u(\cdot) = U(\cdot)\widehat{x}(\cdot) + U_0(\cdot)$:

\begin{equation*}
	\left\{
	\begin{aligned}
		\mathrm{d}x(t) =& \Big\{ A(t)x(t) + B_2(t)U(t)\widehat{x}(t) + B_1(t)v(t) + b(t)\\
		& + B_2(t)U_0(t) \Big\} \mathrm{d}t + C(t)\mathrm{d}W(t) + D(t)\mathrm{d}\widetilde{W}(t),\\
		x(0)=&x_0,
	\end{aligned}
	\right.
\end{equation*}
by letting $B(t) := B_2(t)U(t), \sigma(t) := B_2(t)U_0(t) + b(t)$, it can be simplified to the following controlled SDE:
\begin{equation} \label{eq:3.1}
	\left\{
	\begin{aligned}
		\mathrm{d}x(t) =& \Big\{ A(t)x(t) + B(t)\widehat{x}(t) + B_1(t)v(t) +\sigma(t) \Big\} \mathrm{d}t\\
		& + C(t)\mathrm{d}W(t) + D(t)\mathrm{d}\widetilde{W}(t),\\
		x(0)=&x_0.
	\end{aligned}
	\right.
\end{equation}

This equation describes the evolution of the state $x(t)$ under the influence of stochastic disturbance $v(t)$, the state $x(t)$ and its conditional mathematical expectation $\widehat{x}(t)$, affine terms $\sigma(t)$, and system dynamics noise.

And the corresponding observation equation:
\begin{equation}\label{eq:3.2}
	\left\{
	\begin{aligned}
		& \mathrm{d}y(t) = \Big\{ E(t)x(t) + \beta(t) \Big\}\mathrm{d}t + F(t)\mathrm{d}W(t),\\
		& y(0) = 0.
	\end{aligned}
	\right.
\end{equation}

Let us define the optimal estimate process:
\[ 
\widehat{x}(t) := \mathbb{E}[x(t) \mid \mathcal{Y}_t],
\]
the estimate error and its covariance:
\[
\widetilde{x}(t) := x(t) - \widehat{x}(t),\Sigma(t) := \mathbb{E}[\widetilde{x}(t)\widetilde{x}(t)^{\top}].
\]

For the orthogonality principle, it follows that:
\begin{equation*}
	\begin{aligned}
		&\widehat{x}(t) \:\bot \:\widetilde{x}(t) \: \text{i.e.} \: \mathbb{E}[\langle \widehat{x}(t),\widetilde{x}(t) \rangle] = 0,\\
		&\mathbb{E}|x(t)|^2 = \mathbb{E}|\widehat{x}(t)|^2 + \mathbb{E}|\widetilde{x}(t)|^2,
	\end{aligned}
\end{equation*}
and $\widetilde{x}(t)$ is independent of $\mathcal{Y}_t$, for all $t \geq 0$.

Now we define the so-called innovation process:
\begin{equation}\label{eq:3.3}
	I(t) = y(t) - \int_0^t \Big[E(s)\widehat{x}(s) + \beta(s)\Big]\mathrm{d}s
\end{equation}
and the modified innovation process:
\begin{equation}\label{eq:3.4}
	\widehat{I}(t) = \int_{0}^{t} F(s)^{-1} \mathrm{d}I(s).
\end{equation}

To derive the filtering equation, we present the following three key lemmas (Lemma \ref{lem:3.1}-\ref{lem:3.3}) and omit the proofs of the first two, as their rigorous proofs can be directly found in \cite{Sun2022}.

\begin{lem}\label{lem:3.1}
	$\widehat{I}(t)$ is an $\mathbb{R}^r$-valued $\mathbb{Y}$-adapted Brownian motion and therefore a continuous, $\mathcal{Y}_t$-adapted, integrable martingale process.
\end{lem}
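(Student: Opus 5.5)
The plan is to use Lévy's characterization of Brownian motion. Substituting \eqref{eq:3.2} into \eqref{eq:3.3} gives
\begin{equation*}
I(t) = \int_0^t E(s)\widetilde{x}(s)\,\mathrm{d}s + \int_0^t F(s)\,\mathrm{d}W(s),
\end{equation*}
and therefore
\begin{equation*}
\widehat{I}(t) = \int_0^t F(s)^{-1}E(s)\widetilde{x}(s)\,\mathrm{d}s + W(t).
\end{equation*}
By Assumption \ref{ass:2.3}, $F^{-1}$ is bounded. Since $x\in L^2_{\mathbb{F}}(\Omega;C([0,T];\mathbb{R}^n))$ (Remark \ref{rmk:2.1}) and $\widehat{x}$ is its conditional expectation, $\widehat{I}$ is a continuous, square-integrable process. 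It is $\mathbb{Y}$-adapted, because $y$ is $\mathbb{Y}$-adapted and $\widehat{x}$ is $\mathbb{Y}$-adapted by definition. Its quadratic variation is computed pathwise and is unaffected by the finite-variation drift term. Hence $\langle \widehat{I}\rangle_t = \langle W\rangle_t = tI_r$.

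It then remains to show that $\widehat{I}$ is a $\mathbb{Y}$-martingale. Fix $s<t$ and $A\in\mathcal{Y}_s$. We need
\begin{equation*}
\mathbb{E}\big[\mathbf{1}_A(\widehat{I}(t)-\widehat{I}(s))\big]=0.
\end{equation*}
The Brownian part contributes zero. Indeed, $\mathcal{Y}_s\subset\mathcal{F}_s$, and $W(t)-W(s)$ is independent of $\mathcal{F}_s$ with mean zero. For the drift part, use Fubini and the tower property: for each $u\in[s,t]$,
\begin{equation*}
\mathbb{E}\big[\mathbf{1}_A F(u)^{-1}E(u)\widetilde{x}(u)\big]
= F(u)^{-1}E(u)\,\mathbb{E}\big[\mathbf{1}_A\,\mathbb{E}[\widetilde{x}(u)\mid\mathcal{Y}_u]\big]=0.
\end{equation*}
This uses $A\in\mathcal{Y}_s\subset\mathcal{Y}_u$, the fact that $E$ and $F$ are deterministic, and $\mathbb{E}[\widetilde{x}(u)\mid\mathcal{Y}_u]=0$ by the definition of $\widehat{x}$. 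Fubini is justified by $\mathbb{E}\int_0^T|\widetilde{x}|^2\,\mathrm{d}u<\infty$ together with the boundedness of $E$ and $F^{-1}$.

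Lévy's theorem then yields that $\widehat{I}$ is an $r$-dimensional $\mathbb{Y}$-Brownian motion. In particular it is a continuous, $\mathcal{Y}_t$-adapted, integrable martingale.

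The main subtlety is measurability: one must confirm that $\widehat{x}$ admits a progressively measurable, $\mathbb{Y}$-adapted version, so that $I$ is well defined and adapted. Note also that the closed-loop term $B\widehat{x}$ in \eqref{eq:3.1} could make $\mathbb{Y}$ depend on the control. This does not affect the argument, since only the definition of $\widehat{x}$ as a conditional expectation is used, not Gaussianity. Measurability would be handled by taking the optional projection of $x$ onto $\mathbb{Y}$, following \cite{Sun2022}.
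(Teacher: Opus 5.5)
Your proof is correct and is the standard innovation-process argument that the paper omits and defers to \cite{Sun2022}: writing $\widehat{I}(t)=W(t)+\int_0^t F(u)^{-1}E(u)\widetilde{x}(u)\,\mathrm{d}u$, the drift has zero conditional mean given $\mathcal{Y}_u$, so $\widehat{I}$ is a continuous $\mathbb{Y}$-martingale with quadratic variation $tI_r$, and L\'evy's characterization finishes the argument. Your remark about taking a $\mathbb{Y}$-progressively measurable (optional projection) version of $\widehat{x}$ is the right way to make the adaptedness of $I$ and the Fubini step rigorous.
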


\begin{lem}\label{lem:3.2}
	Define the process as following:
	\begin{equation}\label{eq:3.5}
		\begin{aligned}
			J(t) = &\widehat{x}(t) - \widehat{x}(0)\\
			&- \int_{0}^{t} \Big[ (A(s) + B(s))\widehat{x}(s) + B_1(s)v(s) + \sigma(s) \Big]\mathrm{d}s,
		\end{aligned}
	\end{equation}
	then $J(t)$ is a continuous square-integrable $\mathbb{R}^{n}$-valued $\mathbb{Y}$-martingale with $J(0) = 0, a.s.$.
\end{lem}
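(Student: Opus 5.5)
We need to show that $J$ is a continuous, square-integrable $\mathbb{Y}$-martingale with $J(0)=0$. The plan is to write $x$ in integral form, take conditional expectations given $\mathcal{Y}_t$, and use the $\mathbb{Y}$-adaptedness of $\widehat{x}$ and $v$ to isolate a term that is explicitly a martingale. The statement $J(0)=0$ is immediate from the definition \eqref{eq:3.5}.

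First, fix $0\le s\le t\le T$. From \eqref{eq:3.1},
\begin{equation*}
x(t)-x(s)=\int_s^t\big[A x+B\widehat{x}+B_1v+\sigma\big](r)\,\mathrm{d}r+\int_s^t C\,\mathrm{d}W+\int_s^t D\,\mathrm{d}\widetilde{W}.
\end{equation*}
Take $\mathbb{E}[\,\cdot\mid\mathcal{Y}_s]$ of both sides, using the tower property $\mathbb{E}[\widehat{x}(t)\mid\mathcal{Y}_s]=\mathbb{E}[x(t)\mid\mathcal{Y}_s]$ since $\mathcal{Y}_s\subset\mathcal{Y}_t$. The stochastic integrals have zero $\mathcal{Y}_s$-conditional expectation: they are $\mathbb{F}$-martingale increments, $\mathcal{Y}_s\subset\mathcal{F}_s$, and $C,D$ are deterministic and bounded, so the integrals are square integrable.

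For the drift term, use Fubini for conditional expectations and then the tower property in the form $\mathbb{E}[x(r)\mid\mathcal{Y}_s]=\mathbb{E}[\widehat{x}(r)\mid\mathcal{Y}_s]$ for $r\ge s$. Because $A(r)$ is deterministic,
\begin{equation*}
\mathbb{E}\Big[\int_s^t A x\,\mathrm{d}r\,\Big|\,\mathcal{Y}_s\Big]=\mathbb{E}\Big[\int_s^t A\widehat{x}\,\mathrm{d}r\,\Big|\,\mathcal{Y}_s\Big].
\end{equation*}
The terms $B\widehat{x}$, $B_1v$ and $\sigma$ are already $\mathbb{Y}$-adapted, since $v\in\mathcal{V}^{\mathbb{Y}}$ and $\sigma$ is deterministic, so they are left unchanged. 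Combining these gives $\mathbb{E}[J(t)-J(s)\mid\mathcal{Y}_s]=0$. Adaptedness of $J$ follows from the adaptedness of $\widehat{x}$ and $v$ and the progressive measurability of $\widehat{x}$.

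Second, square integrability. By Jensen's inequality, $\mathbb{E}|\widehat{x}(t)|^2\le\mathbb{E}|x(t)|^2\le\mathbb{E}\sup_t|x|^2<\infty$ by Remark \ref{rmk:2.1}. This, together with the boundedness of the coefficients in Assumptions \ref{ass:2.1}--\ref{ass:2.2} and $v\in L^2$, controls the Lebesgue integrals by Cauchy--Schwarz.

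The main obstacle is continuity, because $\widehat{x}$ is defined only up to modifications. The first option is to take the optional projection of $x$ onto $\mathbb{Y}$. By the identity above, $\widehat{x}(t)$ equals $\widehat{x}(0)$ plus a continuous Lebesgue integral plus $J(t)$, with $J$ a square-integrable martingale. Moreover, $\mathbb{Y}$ is the augmented filtration of $y$, and this filtration coincides with that of the Brownian motion $\widehat{I}$ by the standard Fujisaki--Kallianpur--Kunita argument, using Lemma \ref{lem:3.1}. Hence the martingale representation theorem shows that $J$ has a continuous version. We choose this version of $\widehat{x}$ throughout.

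If the equality of the filtrations of $y$ and $\widehat{I}$ is delicate, as it can be when the feedback involves $\widehat{x}$, the fallback is different. In that case we can argue continuity directly: $J$ is a right-continuous square-integrable martingale in the augmented Brownian-type filtration, and the martingale $t\mapsto\mathbb{E}[\int_0^T C\,\mathrm{d}W+\cdots\mid\mathcal{Y}_t]$ has no jumps because $y$ is continuous. This is in line with the treatment in \cite{Sun2022}.
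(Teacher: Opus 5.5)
The paper gives no proof of this lemma; it defers to \cite{Sun2022}. Your verification of $J(0)=0$, adaptedness, square integrability and the identity $\mathbb{E}[J(t)-J(s)\mid\mathcal{Y}_s]=0$ is the standard argument and is correct. There is one small slip: $\sigma=B_2U_0+b$ is not deterministic, because $U_0\in\mathcal{U}^{\mathbb{Y}}$ is a $\mathbb{Y}$-adapted process. This is harmless. The terms $B\widehat{x}$, $B_1v$ and $\sigma$ appear identically in the drift of $x$ and in the compensator of $J$, so they cancel, and only $\mathbb{E}[\int_s^tA(x-\widehat{x})\,\mathrm{d}r\mid\mathcal{Y}_s]=0$ is needed. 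For square integrability you also need that $U$ is bounded and $U_0\in L^2$.

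The genuine gap is continuity. It is part of the statement, and you correctly identify it as the crux, but you do not establish it.

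Your first option rests on the claim that the Fujisaki--Kallianpur--Kunita argument shows that $\mathbb{Y}$ coincides with the filtration of $\widehat{I}$. FKK proves no such thing. Equality of the observation and innovation filtrations is the innovations problem, a different and much harder question. Here $v$ is an arbitrary $\mathbb{Y}$-adapted process fed back into the dynamics, so this equality cannot be taken for granted, as you yourself suspect.

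Your fallback, that $J$ ``has no jumps because $y$ is continuous'', is not a valid inference. A filtration generated by a continuous process can carry discontinuous martingales: for $y(t)=(t-1)^{+}\xi$, the martingale $\mathbb{E}[\xi\mid\mathcal{Y}_t]$ jumps at $t=1$.

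The missing step is to apply the FKK representation theorem directly. This is Theorem 8.3.1 of Kallianpur, the same result the paper invokes in Lemma~\ref{lem:3.3}. Since $F^{-1}\mathrm{d}y=F^{-1}(Ex+\beta)\,\mathrm{d}t+\mathrm{d}W$ with $W$ an $\mathbb{F}$-Brownian motion and $\mathbb{E}\int_0^T|F^{-1}(Ex+\beta)|^2\,\mathrm{d}t<\infty$, the theorem applies. It says that every square-integrable $\mathbb{Y}$-martingale is sample-continuous and of the form $\int_0^t\lambda H^{-1}\,\mathrm{d}I$. The integrand is $\mathbb{Y}$-adapted, not necessarily $\widehat{I}$-adapted, so no equality of filtrations is required.

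Equivalently, you can reproduce the FKK proof. Change measure so that $\int_0^\cdot F^{-1}\,\mathrm{d}y$ becomes a Brownian motion whose augmented filtration is $\mathbb{Y}$. Use Brownian martingale representation under the new measure. Then transfer continuity back through the continuous, strictly positive $\mathbb{Y}$-density process, with localization.
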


\begin{lem}\label{lem:3.3}
	$J(t)$ can be written in the following form:
	\begin{equation*}
		J(t) = \int_{0}^{t} \Big[\Sigma(s)E(s)^{\top}H(s)^{-1} + C(s)F(s)^{-1}\Big]\mathrm{d}I(s),
	\end{equation*}
	where $\Sigma(\cdot) := \mathbb{E}[\widetilde{x}(\cdot)\widetilde{x}(\cdot)^{\top}], H(\cdot) := F(\cdot)F(\cdot)^{\top}$.
\end{lem}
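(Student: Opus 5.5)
The plan is to combine the martingale representation theorem with respect to the innovation with an identification of the integrand by testing against stochastic integrals. Since the controls $v$ and $\widehat{x}$ are $\mathbb{Y}$-adapted, subtracting the equation for $\widehat{x}$ from \eqref{eq:3.1} shows that $\widetilde{x}$ satisfies a linear SDE driven only by $W,\widetilde{W}$ and the innovation. In that SDE the terms $B\widehat{x}$, $B_1v$ and $\sigma$ cancel. As recorded before Lemma \ref{lem:3.1}, $\widetilde{x}(t)$ is independent of $\mathcal{Y}_t$, so $\mathbb{E}[\widetilde{x}(t)\widetilde{x}(t)^\top\mid\mathcal{Y}_t]=\Sigma(t)$ is deterministic. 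I will use this fact repeatedly.

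\textbf{Step 1 (representation).} By Lemma \ref{lem:3.2}, $J$ is a square-integrable $\mathbb{Y}$-martingale, and by Lemma \ref{lem:3.1}, $\widehat{I}$ is a $\mathbb{Y}$-Brownian motion. I would invoke the Fujisaki--Kallianpur--Kunita representation, i.e.\ the fact that every square-integrable $\mathbb{Y}$-martingale is a stochastic integral against the innovation. This gives a $\mathbb{Y}$-adapted $K\in L^2_{\mathbb{Y}}([0,T];\mathbb{R}^{n\times r})$ with $J(t)=\int_0^t K(s)\,\mathrm{d}\widehat{I}(s)$. Consequently
\[
\mathrm{d}\widehat{x}=\big[(A+B)\widehat{x}+B_1v+\sigma\big]\mathrm{d}t+K\,\mathrm{d}\widehat{I}.
\]
It then remains to show $K=\Sigma E^\top F^{-\top}+C$. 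Since $\mathrm{d}\widehat{I}=F^{-1}\mathrm{d}I$ and $F^{-\top}F^{-1}=H^{-1}$, this yields
\[
J=\int_0^t\big(\Sigma E^\top H^{-1}+CF^{-1}\big)\mathrm{d}I,
\]
which is the claim.

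\textbf{Step 2 (identification).} Fix an arbitrary bounded $\mathbb{Y}$-adapted $\theta(\cdot)\in\mathbb{R}^{r}$ and set $\eta(t)=\int_0^t\theta^\top\mathrm{d}\widehat{I}$. Since $\eta(t)$ is $\mathcal{Y}_t$-measurable, $\mathbb{E}[x(t)\eta(t)]=\mathbb{E}[\widehat{x}(t)\eta(t)]$. From \eqref{eq:3.2}--\eqref{eq:3.4} one has $\mathrm{d}\widehat{I}=F^{-1}E\widetilde{x}\,\mathrm{d}t+\mathrm{d}W$. I will apply It\^o's formula to $x\eta$ and to $\widehat{x}\eta$ and take expectations.
\begin{itemize}
\item For $x\eta$, the new contributions are $\mathbb{E}[x\,\widetilde{x}^\top E^\top F^{-\top}\theta]\,\mathrm{d}t$ and the cross-variation $C\theta\,\mathrm{d}t$. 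The $\widetilde{W}$ part gives no cross-variation.
\item For $\widehat{x}\eta$, the new contributions are $\mathbb{E}[\widehat{x}\,\widetilde{x}^\top E^\top F^{-\top}\theta]\,\mathrm{d}t$ and $K\theta\,\mathrm{d}t$.
\end{itemize}
The drift terms $(A x+B\widehat{x}+B_1v+\sigma)\eta$ and $((A+B)\widehat{x}+B_1v+\sigma)\eta$ have equal expectations, because $\mathbb{E}[\widetilde{x}\eta]=0$. Subtracting the two expansions therefore gives
\[
\mathbb{E}\int_0^t\Big(\widetilde{x}\widetilde{x}^\top E^\top F^{-\top}\theta+C\theta-K\theta\Big)\mathrm{d}s=0.
\]
Here I use that $\widetilde{x}(s)$ is independent of $\mathcal{Y}_s$ while $\theta(s)$ is $\mathcal{Y}_s$-measurable, so $\mathbb{E}[\widetilde{x}\widetilde{x}^\top E^\top F^{-\top}\theta]=\mathbb{E}[\Sigma E^\top F^{-\top}\theta]$. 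This leaves $\mathbb{E}\int_0^t(\Sigma E^\top F^{-\top}+C-K)\theta\,\mathrm{d}s=0$ for all $t$ and all such $\theta$. Choosing $\theta$ to be indicator-type processes (or $\theta=(\cdot)^\top e_j$ applied row-wise) then forces $K=\Sigma E^\top F^{-\top}+C$ for a.e.\ $s$, a.s.

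\textbf{Main obstacle.} The delicate point is Step 2's use of the conditional covariance. One must justify that $\mathbb{E}[\widetilde{x}\widetilde{x}^\top\mid\mathcal{Y}_s]$ equals the deterministic $\Sigma(s)$ even though the controls are feedback in $\widehat{x}$. I would handle this by the cancellation noted in the first paragraph. Because $\widetilde{x}$ does not depend on $(u,v)$, the filtration $\mathbb{Y}$ coincides with that of the uncontrolled Gaussian observation, so the independence statement (the same argument as in \cite{Sun2022}) applies. The integrability needed to kill the local-martingale terms follows from the boundedness in Assumptions \ref{ass:2.1}--\ref{ass:2.3}.
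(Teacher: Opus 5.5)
Your proposal is correct and follows essentially the paper's route. Both arguments take the Fujisaki--Kallianpur--Kunita representation of $J$ against the innovation and identify the integrand by correlating with an arbitrary test integral: yours is $\int\theta^\top\mathrm{d}\widehat{I}$, the paper's is $\Psi=\int\xi H^{-1}\mathrm{d}I$. That identification uses It\^o's formula on $x$ times the test integral, orthogonality of $\widetilde{x}(s)$ to $\mathcal{Y}_s$-measurable quantities, and independence of $\widetilde{x}(s)$ from $\mathcal{Y}_s$ to produce $\Sigma(s)$. The only difference is cosmetic: the paper evaluates $\mathbb{E}[J(t)\Psi(t)^\top]$ by the It\^o isometry rather than expanding $\widehat{x}\eta$ by It\^o's formula, which also spares it the integrability check on the term $\eta K\,\mathrm{d}\widehat{I}$ (this does not follow from the boundedness assumptions alone, since $K$ is only in $L^2$).

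One caveat: your justification of that independence in the last paragraph is circular as written. Before Step 2 is done, the SDE for $\widetilde{x}$ still contains the unidentified, possibly control-dependent integrand $K$, so you cannot yet say $\widetilde{x}$ does not depend on $(u,v)$. The paper simply takes the independence as given. A non-circular route is to split $x=x^0+x^1$, with $x^1$ solving a $\mathbb{Y}$-adapted random ODE, so that $\widetilde{x}=x^0-\mathbb{E}[x^0\mid\mathcal{Y}_t]$, and then argue separately that $\mathbb{Y}$ coincides with the filtration of the uncontrolled observation.
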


\begin{proof}
	By the Fujisaki-Kallianpur-Kunita Theorem (see Theorem 8.3.1 in \cite{Kallianpur1980}), there exists a unique $\lambda(\cdot) \in L_{\mathbb{Y}}^2([0,T];\mathbb{R}^{n \times r})$ such that
	\begin{equation*}
		J(t) = \int_{0}^{t} \lambda(s)H(s)^{-1}\mathrm{d}I(s).
	\end{equation*}
	
	Next, let us determine $\lambda(\cdot)$. Introduce an $\mathbb{R}^{n}$-valued $\mathbb{Y}$-martingale process 
	\begin{equation*}
		\Psi(t) = \int_{0}^{t} \xi(s)H(s)^{-1}\mathrm{d}I(s),
	\end{equation*}
	where $\xi(\cdot) \in L_{\mathbb{Y}}^2([0,T];\mathbb{R}^{n \times r})$ is arbitrary. By It\^{o}'s formula and Lemma \ref{lem:3.1}, we have
	\begin{equation}\label{eq:3.6}
		\mathbb{E}[J(t)\Psi(t)^{\top}] = \mathbb{E}\int_{0}^{t} \lambda(s)H(s)^{-1}\xi(s)^{\top}\mathrm{d}s.
	\end{equation}
	
	By the definition of $J(t)$ in \eqref{eq:3.5} and the fact that $\mathbb{E}[\Psi(t)] = 0$, it follows that
	\begin{equation}\label{eq:3.7}
		\begin{aligned}
			\mathbb{E}[J(t)\Psi(t)^{\top}] =& \mathbb{E}[\widehat{x}(t)\Psi(t)^{\top}] - \mathbb{E}\left\{\int_{0}^{t} \Big[(A(s) + B(s))\widehat{x}(s) \right.\\
			&\left. + B_1(s)v(s) + \sigma(s)\Big]\Psi(t)^{\top}\mathrm{d}s\right\}\\
			=&\mathbb{E}[x(t)\Psi(t)^{\top}] - \mathbb{E}\left\{\int_{0}^{t} \Big[A(s)x(s) \right.\\
			&\left.+ B(s)\widehat{x}(s) + B_1(s)v(s) + \sigma(s)\Big]\Psi(t)^{\top}\mathrm{d}s\right\}.
		\end{aligned}
	\end{equation}
	
	The above equation holds for $s \leq t$, because
	\begin{equation*}
		\begin{aligned}
			\mathbb{E} \Big\{A(s)\widehat{x}(s)\Psi(t)^\top\Big\} &= \mathbb{E}\Big\{A(s) \mathbb{E}[\widehat{x}(s)\Psi(t)^{\top} \mid \mathcal{Y}_s]\Big\} \\
			&= \mathbb{E}\Big\{A(s)\widehat{x}(s)\Psi(s)^\top\Big\}\\
			& = \mathbb{E}\Big\{A(s)\mathbb{E}[x(s) \mid \mathcal{Y}_s]\Psi(s)^\top\Big\} \\
			&= \mathbb{E}\Big\{A(s)x(s)\Psi(s)^\top\Big\}.
		\end{aligned}
	\end{equation*}
	
	Since 
	\begin{equation*}
		\begin{aligned}
			\mathrm{d}\Psi(t) &= \xi(t)H(t)^{-1}\mathrm{d}I(t) \\
			&= \xi(t)H(t)^{-1}\{\mathrm{d}y(t) - [E(t)\widehat{x}(t) + \beta(t)]\mathrm{d}t\}\\
			& =\xi(t)H(t)^{-1}E(t)\widetilde{x}(t)\mathrm{d}t + \xi(t)H(t)^{-1}F(t)\mathrm{d}W(t),
		\end{aligned}
	\end{equation*}
	using It\^o's formula we have 
	\begin{equation}\label{eq:3.8}
		\begin{aligned}
			\mathbb{E}[x(t)\Psi(t)^{\top}] =& \mathbb{E}\bigg\{ \int_{0}^{t} [A(s)x(s) + B(s)\widehat{x}(s) + B_1(s)v(s) \\
			& + \sigma(s)]\Psi(s)^{\top}\mathrm{d}s \Big\}\\
			+& \mathbb{E} \left[\int_{0}^{t} x(s)[\xi(s) H(s)^{-1} E(s)\widetilde{x}(s)]^{\top}\mathrm{d}s\right]\\
			+& \mathbb{E}\left[\int_{0}^{t} C(s)[\xi(s)H(s)^{-1}F(s)]^{\top}\mathrm{d}s\right].
		\end{aligned}
	\end{equation}
	
	Due to $\xi(s) \in \mathcal{Y}_s, \mathbb{E}[\widetilde{x}(s)] = 0$, and $\widetilde{x}(s)$ is independent of $\mathcal{Y}_s$, then
	\begin{equation*}
		\begin{aligned}
			\mathbb{E}&\left[\int_{0}^{t} x(s)[\xi(s) H(s)^{-1} E(s)\widetilde{x}(s)]^{\top}\mathrm{d}s\right]\\
			=&\mathbb{E} \left[\int_{0}^{t} \widetilde{x}(s)\widetilde{x}(s)^\top E(s)^\top H(s)^{-1}\xi(s)^{\top}\mathrm{d}s\right] \\
			&+ \mathbb{E} \left[\int_{0}^{t} \widehat{x}(s)\widetilde{x}(s)^\top E(s)^\top H(s)^{-1}\xi(s)^{\top}\mathrm{d}s\right]\\
			=&\mathbb{E} \left[\int_{0}^{t} \Sigma(s)E(s)^\top H(s)^{-1}\xi(s)^{\top}\mathrm{d}s\right].
		\end{aligned}
	\end{equation*}
	
	Substituting the above equation into \eqref{eq:3.8} and in accordance with \eqref{eq:3.7} yields 
	\begin{equation}\label{eq:3.9}
		\begin{aligned}
			\mathbb{E}[J(t)\Psi(t)^{\top}] = \mathbb{E}\bigg[\int_0^t [\Sigma(s)E(s)^\top + C(s)F(s)^\top]\\
			\cdot H(s)^{-1}\xi(s)^\top\mathrm{d}s\Big].
		\end{aligned}
	\end{equation}
	
	Since $\xi(\cdot) \in L_{\mathbb{Y}}^2([0,T];\mathbb{R}^{n \times r})$ is arbitrary, by comparing \eqref{eq:3.6} and \eqref{eq:3.9}, we have obtained the expression for $\lambda(\cdot)$. This completes the proof.
\end{proof}

With the above preparations completed, we shall now proceed to derive the filtering equation. By the definition of $J(t)$ in \eqref{eq:3.5} and Lemma \ref{lem:3.3}, it follows that 
\begin{equation*}
	\begin{aligned}
		\widehat{x}(t) = &\widehat{x}_0 + \int_{0}^{t} \Big[[A(s)+B(s)]\widehat{x}(s) + B_1(s)v(s) + \sigma(s)\Big]\mathrm{d}s\\
		& + \int_{0}^{t} \Big[\Sigma(s)E(s)^\top H(s)^{-1} + C(s)F(s)^{-1}\Big]\mathrm{d}I(s),
	\end{aligned}
\end{equation*}
where $\Sigma(\cdot)$ is the covariance of estimate error $\widetilde{x}(\cdot)$, and $I(\cdot)$ is the innovation process defined by 
\begin{equation*}
	I(t) = y(t) - \int_{0}^{t} \Big[E(s)\widehat{x}(s) + \beta(s)\Big]\mathrm{d}s, \quad 0\leq t \leq T.
\end{equation*}

Expressed in the form of a differential equation, it is
\begin{equation}\label{eq:3.10}
	\left\{
	\begin{aligned}
		&\mathrm{d}\widehat{x}(t) = \Big\{[A(t) + B(t)]\widehat{x}(t) + B_1(t)v(t) + \sigma(t)\Big\}\,\mathrm{d}t \\
		&\quad\qquad + \Big[\Sigma(t)E(t)^{\top}H(t)^{-1} + C(t)F(t)^{-1}\Big] \\
		&\quad\qquad \cdot \Big\{\mathrm{d}y(t) - [E(t)\widehat{x}(t) + \beta(t)]\mathrm{d}t\Big\},\\
		&\widehat{x}(0) = \widehat{x}_0,
	\end{aligned}
	\right.
\end{equation}
equation \eqref{eq:3.10} is the filtering equation of \eqref{eq:3.1} we desire.

By subtracting $\mathrm{d}\widehat{x}$ from $\mathrm{d}x$, the estimate error $\widetilde{x}$ evolves
\begin{equation}\label{eq:3.11}
	\left\{
	\begin{aligned}
		\mathrm{d}\widetilde{x}(t) =& \Big\{A(t) - \Sigma(t)E(t)^{\top}H(t)^{-1}E(t) \\
		&- C(t)F(t)^{-1}E(t)\Big\}\widetilde{x}(t)\mathrm{d}t\\
		-& \Big\{\Sigma(t)E(t)^{\top}F(t)^{\top,-1}\Big\}\mathrm{d}W(t) + D(t)\mathrm{d}\widetilde{W}(t),\\
		\widetilde{x}(0) = &\widetilde{x}_0.
	\end{aligned}
	\right.
\end{equation}

Finally, we further need to derive the explicit equation satisfied by $\Sigma$. By It\^o's formula and noting that $\mathbb{E}[\widetilde{x}_0{\widetilde{x}_0}^{\top}] = \Sigma_0$, it follows that:
\begin{equation*}
	\begin{aligned}
		\Sigma(t) = & \Sigma_0 + \mathbb{E}\int_{0}^{t} \Big[A(s) - \Sigma(s)E(s)^{\top}H(s)^{-1}E(s) \\
		&\quad - C(s)F(s)^{-1}E(s)\Big]\widetilde{x}(s)\widetilde{x}(s)^{\top}\mathrm{d}s\\
		& + \mathbb{E}\int_{0}^{t} \widetilde{x}(s)\widetilde{x}(s)^{\top}\Big[A(s) - \Sigma(s)E(s)^{\top}H(s)^{-1}E(s) \\
		&\quad - C(s)F(s)^{-1}E(s)\Big]^{\top}\mathrm{d}s\\
		& + \mathbb{E}\int_{0}^{t} \Big[\Sigma(s)E(s)^{\top}H(s)^{-1}F(s)\Big]\\
		& \quad\cdot \Big[\Sigma(s)E(s)^{\top}H(s)^{-1}F(s)\Big]^{\top}\mathrm{d}s \\
		& + \mathbb{E}\int_{0}^{t} D(s)D(s)^{\top}\mathrm{d}s\\
		= & \Sigma_0 + \int_{0}^{t} \Big\{ [A(s) - C(s)F(s)^{-1}E(s)]\Sigma(s) \\
		&\quad + \Sigma(s)[A(s) - C(s)F(s)^{-1}E(s)]^{\top}\\
		&\quad - \Sigma(s)E(s)^{\top}H(s)^{-1}E(s)\Sigma(s) + D(s)D(s)^{\top}\Big\}\mathrm{d}s.
	\end{aligned}
\end{equation*}

That is, $\Sigma$ satisfies the equation:
\begin{equation}\label{eq:3.12}
	\begin{aligned}
		\dot{\Sigma}(t) &= A(t)\Sigma(t) + \Sigma(t) A(t)^{\top} + C(t)C(t)^{\top} \\
		&\quad + D(t)D(t)^{\top} - K(t)H(t)K(t)^{\top},
	\end{aligned}
\end{equation}
where 
\[
K(t) = [\Sigma(t) E(t)^\top + C(t)F(t)^\top]H(t)^{-1}.
\]

\section{Bounded Real Lemma with Partial Observation}\label{sec4}
In this section, we establish the partially observed bounded real lemma, which provides a necessary and sufficient condition for the system to satisfy the $H_\infty$ robustness constraint.

We consider the stochastic differential system \eqref{eq:3.1}
and the measured system output \[z(t) = Q(t)x(t),\] to quantify the system's sensitivity to external disturbance, we define the following perturbation operator and associated norm.

\begin{defn}\label{defn:4.1}
	For system \eqref{eq:3.1}, we define the linear operator $\mathcal{L} : \mathcal{V}^\mathbb{Y} \rightarrow L_\mathbb{F}^2 ([0,T];\mathbb{R}^{n})$ by
	\begin{equation*}
		(\mathcal{L}v)(t) := \widetilde{z}(t) = Q(t)[x^{x_0,0,v}(t) - x^{x_0,0,0}(t)].
	\end{equation*}
	
	The induced operator norm of $\mathcal{L}$, known as the system's $H_\infty$ norm is given by:
	\begin{equation*}
		\begin{aligned}
			\|\mathcal{L}\| &:= \sup_{\substack{v(\cdot) \neq 0 \\ v(\cdot) \in \mathcal{V}^{\mathbb{Y}}}} \frac{\|\widetilde{z}(t)\|_{[0,T]}}{\|v(t)\|_{[0,T]}}.
		\end{aligned}
	\end{equation*}
	where
	\begin{equation*}
		\begin{aligned}
			&\|v(t)\|_{[0,T]} = \Big({\mathbb{E}\int_0^T |v(t)|^2 \mathrm{d}t}\Big)^\frac{1}{2},\\
			&\|\widetilde{z}(t)\|_{[0,T]} = {\Big(\mathbb{E}\int_0^T |Q(t)[x^{x_0,0,v}(t) - x^{x_0,0,0}(t)]|^2 \mathrm{d}t \Big)}^\frac{1}{2}.
		\end{aligned}
	\end{equation*}
\end{defn}

With an initial condition $(0,x_0)$ for system \eqref{eq:3.1}, the robust performance index is defined as:
\begin{equation*}
	\begin{aligned}
		\min_{v(\cdot) \in \mathcal{V}^{\mathbb{Y}}} J_1(0,x_0;0,v&(\cdot)) :=\min_{v(\cdot) \in \mathcal{V}^{\mathbb{Y}}} \mathbb{E}\int_0^T [\gamma^2|v(t)|^2 - |z(t)|^2]\mathrm{d}t\\
		=&\min_{v(\cdot) \in \mathcal{V}^{\mathbb{Y}}} \mathbb{E}\int_0^T [\gamma^2|v(t)|^2 - |Q(t)x(t)|^2]\mathrm{d}t.
	\end{aligned}
\end{equation*}

Taking inspiration from \cite{Sun2022}, we use the orthogonal decomposition technique to decompose the cost functional $J_1(0,x_0;0,v(\cdot))$. Noting that the estimate error $\widetilde{x}(\cdot)$ is independent of disturbance $v(\cdot)$ and initial condition $(0,x_0)$. Moreover, with the discussion in Section \ref{sec3}, $\widehat{x}(\cdot)$ is orthogonal to $\widetilde{x}(\cdot)$, i.e. $\mathbb{E}\langle\widehat{x}(\cdot),\widetilde{x}(\cdot)\rangle = 0$. Then the cost functional $J_1(0,x_0;0,v(\cdot))$ can be decomposed as follows:
\begin{equation*}
	\begin{aligned}
		J_1(0,x_0;0,v(\cdot)) = \widehat{J}_1(x_0;v(\cdot)) + \widetilde{J}_1,
	\end{aligned}
\end{equation*}
where
\begin{equation*}
	\begin{aligned}
		&\widehat{J}_1(x_0;v(\cdot)) = \mathbb{E}\int_0^T \langle\gamma^2v(t),v(t)\rangle - \langle Q(t)^{\top}Q(t)\widehat{x}(t),\widehat{x}(t)\rangle \mathrm{d}t,\\
		&\widetilde{J}_1 = - \mathbb{E}\int_0^T \langle Q(t)^{\top}Q(t)\widetilde{x}(t),\widetilde{x}(t)\rangle \mathrm{d}t.\\
	\end{aligned}
\end{equation*}

Note that the stochastic optimal control problem corresponding to $\widehat{J}_1(x_0;v(\cdot))$ is a complete observed LQ problem, since the state estimation $\widehat{x}(\cdot)$ is adapted to filtration $\mathbb{Y}$, and $\widetilde{J}_1$ does not depend on the disturbance $v(\cdot)$, so we only need to solve the previous complete observed LQ problem. The aforementioned LQ problem is indefinite since the cost functional has the positive weighting matrix on the disturbance and the negative weighting matrix on the state.

For $\widehat{J}_1(x_0;v(\cdot))$, recall the filtering equation \eqref{eq:3.10}, we introduce the indefinite backward differential Riccati equation:
\begin{equation}\label{eq:4.1}
	\left\{
	\begin{aligned}
		&\dot{P}(t) + P(t)[A(t) + B(t)] + [A(t) + B(t)]^\top P(t)\\
		&\qquad - Q(t)^\top Q(t) - \gamma^{-2}P(t)B_1(t)B_1(t)^\top{P}(t) = 0,\\
		&P(T) = 0,
	\end{aligned}
	\right.
\end{equation}
and the backward ordinary differential equation(ODE):
\begin{equation}\label{eq:4.2}
	\left\{
	\begin{aligned}
		& \dot{\eta}(t) + [A(t) + B(t) - \gamma^{-2}B_1(t)B_1(t)^\top P(t)]^\top \eta(t)\\
		&\qquad \qquad \qquad \qquad \qquad \qquad \qquad + P(t)\sigma(t)= 0,\\
		& \eta(T) = 0.
	\end{aligned}
	\right.
\end{equation}

For $\widetilde{J}_1$, recall the estimate error equation \eqref{eq:3.11}. Correspondingly, we introduce a (backward) Riccati differential equation and an (backward) ordinary differential equation (ODE) as follow:
\begin{equation}\label{eq:4.3}
	\left\{
	\begin{aligned}
		& \dot{\Pi}(t) + \Pi(t)\mathcal{A}(t) + \mathcal{A}(t)^{\top}\Pi(t) -Q(t)^{\top}Q(t) = 0,\\
		& \Pi(T) = 0,
	\end{aligned}
	\right.
\end{equation}

\begin{equation}\label{eq:4.4}
	\left\{
	\begin{aligned}
		& \dot{\varphi}(t) + \mathcal{A}(t)^{\top}\varphi(t) = 0,\\
		& \varphi(T) = 0,
	\end{aligned}
	\right.
\end{equation}
where $\mathcal{A}(t)$ defined by \[\mathcal{A}(t) := A(t) - \Sigma(t)E(t)^{\top}H(t)^{-1}E(t) - C(t)F(t)^{-1}E(t).\]

\begin{rmk}\label{rmk:4.2}
	Assume that Assumptions \ref{ass:2.1}-\ref{ass:2.4} hold. By Theorem 7.2 in \cite{Yong1999}, Riccati equation \eqref{eq:4.3} admits a unique solution $\Pi(\cdot) \in C([0,T];\mathbb{S}^n)$. In addition, ODEs \eqref{eq:4.2} and \eqref{eq:4.4} admit unique solutions $\eta(\cdot) \in L^2_{\mathbb{Y}}([0,T];\mathbb{R}^n)$ and $\varphi(\cdot) \in L^2_{\mathbb{Y}}([0,T];\mathbb{R}^n)$, respectively.
\end{rmk}

Now we can present the Stochastic Bounded Real Lemma for stochastic differential systems with partial observation. The following result establishes a necessary and sufficient condition for the $H_\infty$ norm of the
system to be less than a prescribed disturbance attenuation level $\gamma$, based on the solvability of the Riccati equation \eqref{eq:4.1}. In addition, according to the solutions of equations \eqref{eq:4.2}, \eqref{eq:4.3} and \eqref{eq:4.4}, the observation-feedback representation of the worst-case disturbance and corresponding optimal value can be derived.

\begin{lem}\label{lem:4.3}
	Let Assumptions \ref{ass:2.1}-\ref{ass:2.4} hold. For a given disturbance attenuation level $\gamma > 0$, the system \eqref{eq:3.1} satisfies $\|\mathcal{L}\| < \gamma$ if and only if the Riccati equation \eqref{eq:4.1} admits a solution $P(\cdot) \in C([0,T];\mathbb{S}^n)$. Moreover, the corresponding worst-case disturbance in Definition \ref{defn:2.2} is given by the following observation-feedback form:
	\begin{equation}\label{eq:4.5}
		v^*(t) = -\gamma^{-2}B_1(t)^{\top}[P(t)\widehat{x}(t) + \eta(t)],
	\end{equation}
	where $\widehat{x}(t)$ evolves
	\begin{equation*}
		\left\{
		\begin{aligned}
			&\mathrm{d}\widehat{x}(t) = \Big\{[A(t) + B(t) - \gamma^{-2}B_1(t)B_1(t)^{\top}P(t)]\widehat{x}(t) \\
			&\qquad \quad- \gamma^{-2}B_1(t)B_1(t)^{\top}\eta(t) + \sigma(t)\Big\}\,\mathrm{d}t \\
			&\qquad + \Big[\Sigma(t)E(t)^{\top}H(t)^{-1} + C(t)F(t)^{-1}\Big] \\
			&\qquad \quad\cdot \Big\{\mathrm{d}y(t) - \Big[E(t)\widehat{x}(t) + \beta(t)\Big]\mathrm{d}t\Big\},\\
			&\widehat{x}(0) = \widehat{x}_0,
		\end{aligned}
		\right.
	\end{equation*}
	and the corresponding optimal value is expressed as:
	\begin{equation}\label{eq:4.6}
		\begin{aligned}
			J_1&(0,x_0;0,v^*(\cdot)) = \mathbb{E} \langle P(0)\widehat{x}_0,\widehat{x}_0 \rangle + 2\mathbb{E}\langle \eta(0),\widehat{x}_0 \rangle\\
			&+ \mathbb{E}\int_0^T \Big\{ 2\langle \eta(t),\sigma(t) \rangle - \langle \gamma^{-2} B_1(t)^\top \eta(t),B_1(t)^\top \eta(t) \rangle\\
			&\qquad + \langle P(t)[\Sigma(t) E(t)^\top H(t)^{\top,-1} + C(t)],\\
			&\qquad \quad [\Sigma(t)E(t)^\top H(t)^{\top,-1} + C(t)] \rangle\\
			&\qquad +\langle\Pi(t)[\Sigma(t)E(t)H(t)^{\top,-1}],[\Sigma(t)E(t)H(t)^{\top,-1}]\rangle \\
			&\qquad + \langle\Pi(t)D(t),D(t)\rangle \Big\} \mathrm{d}t.
		\end{aligned}
	\end{equation}
\end{lem}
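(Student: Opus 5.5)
Because the map $v\mapsto \widetilde z=\mathcal{L}v$ is linear, the plan is to work with the homogeneous system and then add back the affine part. Take $x_0=0$, $\sigma\equiv0$, and drop the noise. The difference $\delta x:=x^{x_0,0,v}-x^{x_0,0,0}$ solves
\begin{equation*}
\mathrm{d}\delta x=[A\delta x+B\widehat{\delta x}+B_1v]\mathrm{d}t,\qquad \delta x(0)=0.
\end{equation*}
The error process \eqref{eq:3.11} does not depend on $v$. By \eqref{eq:3.10}, $\widehat{\delta x}$ solves $\mathrm{d}\widehat{\delta x}=[(A+B)\widehat{\delta x}+B_1v]\mathrm{d}t$, which is pathwise linear and $\mathbb{Y}$-adapted, and $\delta x=\widehat{\delta x}$. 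Hence $\|\mathcal{L}\|<\gamma$ is equivalent to uniform positivity of $\widehat J^0_1(v):=\mathbb{E}\int_0^T[\gamma^2|v|^2-|Q\widehat{\delta x}|^2]\mathrm{d}t$. Concretely, we need some $\varepsilon>0$ with $\widehat J^0_1(v)\ge\varepsilon\|v\|^2_{[0,T]}$ for all $v\in\mathcal{V}^{\mathbb{Y}}$. This is a fully observed, $\mathbb{Y}$-adapted deterministic-coefficient LQ problem with state weight $-Q^\top Q$ and control weight $\gamma^2I$.

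For sufficiency, suppose $P$ solves \eqref{eq:4.1}. Apply It\^o's formula to $\langle P\widehat{\delta x},\widehat{\delta x}\rangle$ on $[0,T]$ and complete the square:
\begin{equation*}
\widehat J^0_1(v)=\gamma^2\mathbb{E}\int_0^T|v+\gamma^{-2}B_1^\top P\widehat{\delta x}|^2\mathrm{d}t.
\end{equation*}
The map $v\mapsto v+\gamma^{-2}B_1^\top P\widehat{\delta x}$ is bounded and invertible on $\mathcal{V}^{\mathbb{Y}}$, since its inverse is given by solving a linear ODE with bounded coefficients. Therefore $\widehat J^0_1(v)\ge\varepsilon\|v\|^2$, which gives $\|\mathcal{L}\|<\gamma$.

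For necessity, follow the standard Bounded Real Lemma argument (Chen--Zhang, or Yong--Zhou's uniform convexity result). Uniform convexity of $\widehat J^0_1$ holds on every subinterval $[t,T]$, with the same $\varepsilon$ obtained by extending $v$ by zero. This yields a uniform bound $|P(t)|\le c$ for the local solution of \eqref{eq:4.1}: the value function is bounded above by its value at $v=0$ and below by $0$. So $P$ cannot blow up, and it extends to all of $[0,T]$. This step is the main obstacle. I would borrow it directly from the full-information result, since after the reduction above the problem is exactly a deterministic-coefficient LQ problem.

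For the worst case with the affine terms, use the orthogonal decomposition $J_1=\widehat J_1+\widetilde J_1$ stated before the lemma. First apply It\^o's formula to $\langle P\widehat x,\widehat x\rangle+2\langle\eta,\widehat x\rangle$ using the filter \eqref{eq:3.10}. The diffusion is $(\Sigma E^\top H^{-1}+CF^{-1})\mathrm{d}I=(\Sigma E^\top H^{-1}F+C)\mathrm{d}\widehat I$ by Lemma~\ref{lem:3.1}, which produces the trace term $\langle P(\Sigma E^\top H^{\top,-1}F+C),\cdot\rangle$. Using \eqref{eq:4.1} and \eqref{eq:4.2}, this gives
\begin{equation*}
\widehat J_1=\gamma^2\mathbb{E}\int_0^T|v+\gamma^{-2}B_1^\top(P\widehat x+\eta)|^2\mathrm{d}t+(\text{terms independent of }v).
\end{equation*}
So \eqref{eq:4.5} is the minimizer, and it is $\mathbb{Y}$-adapted and admissible. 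Substituting it into \eqref{eq:3.10} gives the stated closed-loop filter.

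Second, apply It\^o's formula to $\langle\Pi\widetilde x,\widetilde x\rangle$ using \eqref{eq:3.11} and \eqref{eq:4.3}. The diffusion terms $-\Sigma E^\top F^{\top,-1}\mathrm{d}W$ and $D\,\mathrm{d}\widetilde W$ produce the $\Pi$ trace terms, and $\varphi\equiv0$ by \eqref{eq:4.4}. Collecting the initial terms $\mathbb{E}\langle P(0)\widehat x_0,\widehat x_0\rangle+2\mathbb{E}\langle\eta(0),\widehat x_0\rangle$ together with the $\Pi(0)$ term on $\widetilde x_0$ yields \eqref{eq:4.6}, up to the bookkeeping of the matrix factors as written there.
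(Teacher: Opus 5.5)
This is essentially the paper's proof. Like the paper, you reduce $x^{x_0,0,v}-x^{x_0,0,0}$ to the pathwise ODE $\dot{\bar x}=(A+B)\bar x+B_1v$ via $\bar x=\widehat{\bar x}$, use the bounded real lemma for that ODE, and then apply It\^o's formula with completion of squares on $\widehat x$ (with $(P,\eta)$) and on $\widetilde x$ (with $\Pi$). The paper only cites the bounded real lemma from Limebeer et al.; you partly unpack it, and your sufficiency half is complete.

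There are two small corrections.

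\textbf{Lower bound in the necessity sketch.} The value $\langle P(t)\xi,\xi\rangle$ is $\le 0$; it is not bounded below by $0$. The correct lower bound $-c|\xi|^2$ comes from
\[
J(t,\xi;v)\ge\varepsilon\|v\|^2-2c|\xi|\,\|v\|-c|\xi|^2 ,
\]
and this is exactly where the uniform $\varepsilon$ is needed.

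\textbf{The $\Pi(0)$ term.} Your extra term $\mathbb{E}\langle\Pi(0)\widetilde x_0,\widetilde x_0\rangle$ is correct in general. It vanishes here because $\mathcal{F}_0$, generated by $(W,\widetilde W)$, is trivial. Hence $\widetilde x_0=0$, which is why \eqref{eq:4.6} does not show it.
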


\begin{proof}
	Let $\bar{x}(t) := x^{x_0,0,v}(t) - x^{x_0,0,0}(t)$, a straightforward calculation shows that $\bar{x}(t)$ satisfies the equation:
	\begin{equation}\label{eq:4.7}
		\left\{
		\begin{aligned}
			&\dot{\bar{x}}(t) = A(t)\bar{x}(t) + B(t)\widehat{\bar{x}}(t) + B_1(t)v(t),\\
			&\bar{x}(0) = 0.
		\end{aligned}
		\right.
	\end{equation}
	
	The state process $\bar{x}(\cdot)$ of system \eqref{eq:4.7} is entirely determined by the disturbance process $v(\cdot)$. Since $v(\cdot) \in \mathcal{V}^{\mathbb{Y}}$ is adapted to the filtration $\mathbb{Y}$, it follows that $\bar{x}(\cdot)$ is also adapted to $\mathbb{Y}$. Hence, $\widehat{\bar{x}}(t)=\bar{x}(t)$, and equation \eqref{eq:4.7} is equivalent to 
	
	\begin{equation*}
		\left\{
		\begin{aligned}
			&\dot{\bar{x}}(t) = [A(t) + B(t)]\bar{x}(t) + B_1(t)v(t),\\
			&\bar{x}(0) = 0.
		\end{aligned}
		\right.
	\end{equation*}
	
	From the definition of the operator norm $\left\|\mathcal{L}\right\|$ in Definition \ref{defn:4.1}, this can be viewed as an $H_\infty$ control problem for an ordinary deterministic linear system. According to \textit{Lemma 2.2} of \cite{Limebeer1994}, for system \eqref{eq:3.1}, the condition $\left\|\mathcal{L}\right\| < \gamma$ holds if and only if the Riccati equation \eqref{eq:4.1} admits a solution $P(\cdot) \in C([0,T];\mathbb{S}^n)$.
	
	Suppose that the Riccati equation \eqref{eq:4.3} admits a unique solution $\Pi(\cdot) \in C([0,T];\mathbb{S}^n)$, as well as ODEs \eqref{eq:4.2} and \eqref{eq:4.4} admit unique solutions $\eta(\cdot)$ and $\varphi(\cdot)$ in $L^2_{\mathbb{Y}}([0,T];\mathbb{R}^n)$. By the definition of the modified innovation process $\widehat{I}(t)$ in \eqref{eq:3.4}, the filtering equation \eqref{eq:3.10} can be written as follow:
	\begin{equation*}
		\left\{
		\begin{aligned}
			&\mathrm{d}\widehat{x}(t) = \Big\{[A(t) + B(t)]\widehat{x}(t) + B_1(t)v(t) + \sigma(t)\Big\}\,\mathrm{d}t \\
			&\qquad + \Big\{\Sigma(t)E(t)^{\top}F(t)^{\top,-1} + C(t)\Big\}\mathrm{d}\widehat{I}(t),\\
			&\widehat{x}(0) = \widehat{x}_0.
		\end{aligned}
		\right.
	\end{equation*}
	
	By applying It\^{o}'s formula to $\langle P(t)\widehat{x}(t) + 2\eta(t),\widehat{x}(t) \rangle$, we obtain
	\begin{equation*}
		\begin{aligned}
			\mathbb{E} &\langle P(T)\widehat{x}(T) + 2\eta(T),\widehat{x}(T) \rangle - \mathbb{E} \langle P(0)\widehat{x}_0 + 2\eta(0),\widehat{x}_0 \rangle \\
			= &\mathbb{E} \int_0^T \Big\{2\langle P(t)\{[A(t) + B(t)]\widehat{x}(t) + B_1(t)v(t) + \sigma(t)\},\widehat{x}(t) \rangle \\
			&\qquad + \langle P(t)[\Sigma(t)E(t)^{\top}F(t)^{\top,-1} + C(t)],\\
			&\qquad [\Sigma(t)E(t)^{\top}F(t)^{\top,-1} + C(t)] \rangle + \langle \dot{P}(t)\widehat{x}(t),\widehat{x}(t) \rangle\Big\} \mathrm{d}t\\
			+& \mathbb{E} \int_0^T \Big\{2\langle \eta(t),[A(t) + B(t)]\widehat{x}(t) + B_1(t)v(t) + \sigma(t) \rangle\\
			&\qquad + 2\langle \dot{\eta}(t),\widehat{x}(t) \rangle \Big\} \mathrm{d}t.
		\end{aligned}
	\end{equation*}
	
	Noting that $P(T) = 0,\eta(T) = 0$ and substituting for $\mathbb{E} \langle P(T)\widehat{x}(T) + 2\eta(T),\widehat{x}(T) \rangle$ in the cost functional $\widehat{J}_1(x_0;v(\cdot))$, then combining equation \eqref{eq:4.1} and \eqref{eq:4.2} and applying the method of completing the square, we obtain
	\begin{equation}\label{eq:4.8}
		\begin{aligned}
			\widehat{J}_1(x_0;&v(\cdot)) = \mathbb{E} \langle P(0)\widehat{x}_0,\widehat{x}_0 \rangle + 2\mathbb{E} \langle \eta(0),\widehat{x}_0 \rangle\\
			&+ \mathbb{E} \int_0^T \Big\{\langle \gamma^2\{v(t) + \gamma^{-2}B_1(t)^{\top}[P(t)\widehat{x}(t) + \eta(t)]\},\\
			&\qquad \{v(t) + \gamma^{-2}B_1(t)^{\top}[P(t)\widehat{x}(t) + \eta(t)]\} \rangle\\
			&\quad + \langle P(t)[\Sigma(t) E(t)^\top H(t)^{\top,-1} + C(t)],\\
			&\qquad [\Sigma(t)E(t)^\top H(t)^{\top,-1} + C(t)] \rangle\\
			&\quad + 2\langle \eta(t),\sigma(t) \rangle - \langle \gamma^{-2} B_1(t)^\top \eta(t),B_1(t)^\top \eta(t) \rangle\Big\} \mathrm{d}t.
		\end{aligned}
	\end{equation}
	
	Thus, the cost functional $\widehat{J}_1(x_0;v(\cdot))$ is minimized by the choice of $v^*(\cdot)$ of \eqref{eq:4.5}.
	
	Using the notation of $\mathcal{A}(t)$, the SDE \eqref{eq:3.11} can be written as:
	\begin{equation*}
		\left\{
		\begin{aligned}
			\mathrm{d}\widetilde{x}(t) =& \mathcal{A}(t)\widetilde{x}(t)\mathrm{d}t - \Big\{\Sigma(t)E(t)^{\top}H(t)^{\top,-1}\Big\}\mathrm{d}W(t) \\
			&+ D(t)\mathrm{d}\widetilde{W}(t),\\
			\widetilde{x}(0) = &\widetilde{x}_0.
		\end{aligned}
		\right.
	\end{equation*}
	
	Applying It\^{o}'s formula to $\langle \Pi(t)\widetilde{x}(t) + 2\varphi(t),\widetilde{x}(t) \rangle$ and substituting $\mathbb{E} \langle \Pi(T)\widetilde{x}(T) + 2\varphi(T),\widetilde{x}(T) \rangle$ in the cost functional $\widetilde{J}_1$, combining equation \eqref{eq:4.3} and \eqref{eq:4.4}, we obtain
	\begin{equation}\label{eq:4.9}
		\begin{aligned}
			\widetilde{J}_1& = \mathbb{E} \int_0^T \Big\{\langle \Pi(t)D(t),D(t) \rangle \\
			&+ \langle \Pi(t)[\Sigma(t)E(t)^{\top}H(t)^{\top,-1}],\Sigma(t)E(t)^{\top}H(t)^{\top,-1} \rangle\Big\} \mathrm{d}t.
		\end{aligned}
	\end{equation}
	
	Due to the cost functional $J_1(0,x_0;0,v(\cdot))$ can be written as the sum of $\widehat{J}_1(x_0;v(\cdot))$ and $\widetilde{J}_1$, and $\widetilde{J}_1$ does not depend on the disturbance $v(\cdot)$. Thus, minimizing $\widehat{J}_1(x_0;v(\cdot))$ with respect to $v^*(\cdot)$ also minimizes $J_1(0,x_0;0,v(\cdot))$. Finally, adding $\widehat{J}_1(x_0;v^*(\cdot))$ and $\widetilde{J}_1$ we get the optimal value \eqref{eq:4.6}.
\end{proof}

\section{The Solution of $H_2/H_\infty$ Control Problem}\label{sec5}
In this section, we establish a connection between the existence of a closed-loop Nash equilibrium point for the $H_2/H_\infty$ problem under partial information and the solvability of the corresponding Riccati equations. Moreover, an observation-based feedback representation for the solution to the $H_2/H_\infty$ problem is presented.

To state the main results of this paper, we introduce the cross-coupled Riccati equations as follows:
\begin{align}
	&\dot{P}_1 + P_1(A + B_2U) + (A + B_2U)^{\top}P_1 - Q^{\top}Q \nonumber\\
	&\qquad \qquad \qquad - U^{\top}U - \gamma^{-2}P_1 B_1 B_1^{\top} P_1 = 0, \label{eq:5.1}\\
	&\dot{P}_2 + P_2(A + B_1V) + (A + B_1V)^{\top}P_2 + Q^{\top}Q \nonumber\\
	&\qquad\qquad\qquad\qquad\qquad\quad - P_2 B_2 B_2^{\top} P_2 = 0, \label{eq:5.2}\\
	&P_1(T) = 0, P_2(T) = 0, \nonumber\\
	&V = -\gamma^{-2}B_1^{\top}P_1, U = -B_2^{\top}P_2. \nonumber
\end{align}

For the affine correction terms pertaining to the disturbance and control strategies, we introduce the following backward ODEs to characterize them:
\begin{align}
	&\dot{\eta}_1 + [A + B_2U - \gamma^{-2}B_1B_1^{\top}P_1]^{\top}\eta_1 + P_1(B_2U_0 + b) = 0, \label{eq:5.3}\\
	&\dot{\eta}_2 + [A + B_1V - B_2B_2^{\top}P_2]^{\top}\eta_2 + P_2(B_1V_0 + b) = 0, \label{eq:5.4}\\
	&\eta_1(T) = 0, \eta_2(T) = 0, \nonumber\\
	&V = -\gamma^{-2}B_1^{\top}P_1, U = -B_2^{\top}P_2;\nonumber\\
	&V_0 = -\gamma^{-2}B_1^{\top}\eta_1, U_0 = -B_2^{\top}\eta_2. \label{eq:5.5}
\end{align}

Furthermore, to express the corresponding optimal value, we also need to introduce the following standard Riccati equations:
\begin{align}
	&\dot{\Pi}_1 + \Pi_1\mathcal{A} + \mathcal{A}^{\top}\Pi_1 - Q^{\top}Q = 0, \label{eq:5.6}\\
	&\dot{\Pi}_2 + \Pi_2\mathcal{A} + \mathcal{A}^{\top}\Pi_2 + Q^{\top}Q = 0, \label{eq:5.7}\\
	&\Pi_1(T) = 0, \Pi_2(T) = 0, \nonumber
\end{align}
and the ODEs:
\begin{align}
	&\dot{\varphi}_1 + \mathcal{A}^{\top}\varphi_1 = 0, \label{eq:5.8}\\
	&\dot{\varphi}_2 + \mathcal{A}^{\top}\varphi_2 = 0, \label{eq:5.9}\\
	&\varphi_1(T) = 0, \varphi_2(T) = 0 \nonumber
\end{align}
where $\mathcal{A}(t)$ is defined by \[\mathcal{A}(t) := A(t) - \Sigma(t)E(t)^{\top}H(t)^{-1}E(t) - C(t)F(t)^{-1}E(t).\]

\begin{rmk}
	It can be readily observed that the solutions to equation \eqref{eq:5.8} and equation \eqref{eq:5.9} are identical, i.e. $\varphi_1(t) \equiv \varphi_2(t), \forall t \in [0,T]$. Under Assumptions \ref{ass:2.1}-\ref{ass:2.4}, it follows from Theorem 7.2 in \cite{Yong1999} that the ODE \eqref{eq:5.3} and the Riccati equation \eqref{eq:5.6} admit unique solutions $\eta_1(\cdot) \in L_{\mathbb{Y}}^2([0,T];\mathbb{R}^n)$ and $\Pi_1(\cdot) \in C([0,T];\mathbb{S}^n)$, respectively.
\end{rmk}

We now first address the system's $H_2$ optimality when the worst-case disturbance is imposed on system \eqref{eq:2.1}.
\begin{lem}\label{lem:5.2}
	Let Assumptions \ref{ass:2.1}-\ref{ass:2.4} hold. Suppose the disturbance process admits the following observation-feedback form:
	\begin{equation}\label{eq:5.10}
		v(\cdot) = V(\cdot)\widehat{x}(\cdot) + V_0(\cdot), \quad(V(\cdot),V_0(\cdot)) \in \mathcal{M}^2[0,T].
	\end{equation}
	
	Substituting \eqref{eq:5.10} into the state equation \eqref{eq:2.1}, we obtain the following differential system which involves not only the state process $x(\cdot)$ but also the conditional mathematical expectation $\widehat{x}(\cdot)$ of the state process:
	\begin{equation}\label{eq:5.11}
		\left\{
		\begin{aligned}
			\mathrm{d}x(t) =& \Big\{ A(t)x(t) + B_1(t)V(t)\widehat{x}(t) + B_2(t)u(t) \\
			&+ B_1(t)V_0(t) + b(t) \Big\} \mathrm{d}t \\
			&+ C(t)\mathrm{d}W(t) + D(t)\mathrm{d}\widetilde{W}(t),\\
			x(0)=&x_0.
		\end{aligned}  
		\right.
	\end{equation}
	
	The aforementioned system, in conjunction with the $H_2$ optimality criterion \[J_2(0,x_0;u(\cdot),v(\cdot)) = \mathbb{E}\int_0^T [|Q(t)x(t)|^2 + |N_1(t)u(t)|^2] \mathrm{d}t\] forms an LQ problem under partial information.
	
	Then the Riccati equations \eqref{eq:5.2} and \eqref{eq:5.7} admit unique solutions $P_2(\cdot) \in C([0,T];\mathbb{S}^n)$ and $\Pi_2(\cdot) \in C([0,T];\mathbb{S}^n)$, as well as ODEs \eqref{eq:5.4} and \eqref{eq:5.9} admit unique solutions $\eta_2(\cdot) \in L_{\mathbb{Y}}^2([0,T];\mathbb{R}^n)$ and $\varphi_2(\cdot) \in L_{\mathbb{Y}}^2([0,T];\mathbb{R}^n)$.
	
	Moreover, for the aforementioned partially observed LQ problem, there exists a unique optimal observation-feedback control of the form 
	\[
	u^*(t) = -B_2^{\top}(t)P_2(t)\widehat{x^*}(t) - B_2^{\top}(t)\eta_2(t),
	\]
	where $\widehat{x^*}(\cdot)$ is the optimal estimate state under $u^*$,
	and the corresponding optimal cost is given by
	\begin{equation}\label{eq:5.12}
		\begin{aligned}
			J_2(0,&x_0;u^*(\cdot),v(\cdot)) = \mathbb{E} \langle P_2(0)\widehat{x}_0,\widehat{x}_0 \rangle + 2\mathbb{E}\langle \eta_2(0),\widehat{x}_0 \rangle\\
			&+ \mathbb{E}\int_0^T \Big\{ 2\langle \eta_2(t),B_1(t)V_0(t) + b(t) \rangle \\
			&\quad - \langle B_2(t)^\top \eta_2(t),B_2(t)^\top \eta_2(t) \rangle\\
			&\quad + \langle P_2(t)[\Sigma(t) E(t)^\top H(t)^{\top,-1} + C(t)],\\
			&\qquad [\Sigma(t)E(t)^\top H(t)^{\top,-1} + C(t)] \rangle\\
			&\quad +\langle\Pi_2(t)[\Sigma(t)E(t)H(t)^{\top,-1}],[\Sigma(t)E(t)H(t)^{\top,-1}]\rangle \\
			&\quad + \langle\Pi_2(t)D(t),D(t)\rangle \Big\} \mathrm{d}t.
		\end{aligned}
	\end{equation}
\end{lem}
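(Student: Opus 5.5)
The plan is to mirror the proof of Lemma \ref{lem:4.3}, with the roles of disturbance and control interchanged and the indefinite Riccati equation replaced by a standard one.

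\textbf{Step 1 (solvability).} In \eqref{eq:5.2}, with $V(\cdot)$ a given continuous gain, the equation is a standard LQ Riccati equation for the drift $A+B_1V$. Its state weight $Q^\top Q\ge 0$ and its control weight is $N_1^\top N_1 = I_s > 0$ by Assumption \ref{ass:2.4}. Theorem 7.2 of \cite{Yong1999} then gives a unique solution $P_2(\cdot)\in C([0,T];\mathbb{S}^n)$ with $P_2\ge 0$. Equation \eqref{eq:5.7} is a linear Lyapunov equation and has a unique continuous solution. Equations \eqref{eq:5.4} and \eqref{eq:5.9} are linear ODEs with bounded coefficients once $P_2$ is known, so they also have unique solutions; moreover $\varphi_2\equiv 0$. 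These solutions are deterministic, hence trivially $\mathbb{Y}$-adapted.

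\textbf{Step 2 (filtering and orthogonal decomposition).} For an arbitrary $u\in\mathcal{U}^{\mathbb{Y}}$, I repeat the derivation of Section \ref{sec3}, with $B\widehat{x}$ replaced by $B_1V\widehat{x}$ and the term $B_1v+\sigma$ replaced by $B_2u+B_1V_0+b$. Lemmas \ref{lem:3.1}--\ref{lem:3.3} go through verbatim. This gives
\[
\mathrm{d}\widehat{x}=\{(A+B_1V)\widehat{x}+B_2u+B_1V_0+b\}\mathrm{d}t+(\Sigma E^\top F^{\top,-1}+C)\mathrm{d}\widehat{I},
\]
and the error $\widetilde{x}$ satisfies \eqref{eq:3.11}. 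The key point is that \eqref{eq:3.11} and \eqref{eq:3.12} involve neither $u$ nor $V$, so $\widetilde{x}$ does not depend on the control. Using $\mathbb{E}\langle\widehat{x},\widetilde{x}\rangle=0$ and $|N_1u|^2=|u|^2$, I split
\[
J_2=\widehat{J}_2(u)+\widetilde{J}_2,\qquad \widehat{J}_2=\mathbb{E}\int_0^T\big(|Q\widehat{x}|^2+|u|^2\big)\mathrm{d}t,\qquad \widetilde{J}_2=\mathbb{E}\int_0^T|Q\widetilde{x}|^2\mathrm{d}t,
\]
where $\widetilde{J}_2$ is independent of $u$.

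\textbf{Step 3 (completion of squares).} I apply It\^o's formula to $\langle P_2\widehat{x}+2\eta_2,\widehat{x}\rangle$ along the filter and use $P_2(T)=0$, $\eta_2(T)=0$, \eqref{eq:5.2} and \eqref{eq:5.4}. This should give
\[
\widehat{J}_2(u)=\mathbb{E}\langle P_2(0)\widehat{x}_0,\widehat{x}_0\rangle+2\mathbb{E}\langle\eta_2(0),\widehat{x}_0\rangle+\mathbb{E}\int_0^T\Big\{|u+B_2^\top(P_2\widehat{x}+\eta_2)|^2+2\langle\eta_2,B_1V_0+b\rangle-|B_2^\top\eta_2|^2+\langle P_2(\Sigma E^\top F^{\top,-1}+C),\Sigma E^\top F^{\top,-1}+C\rangle\Big\}\mathrm{d}t.
\]
The cross terms in $u$ cancel against $-P_2B_2B_2^\top P_2$, and the $\eta_2$-linear terms cancel through \eqref{eq:5.4}. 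Hence $\widehat{J}_2$ is minimized exactly when $u^*=-B_2^\top(P_2\widehat{x^*}+\eta_2)$. Uniqueness follows because the square vanishes only for this choice. Admissibility also holds: the closed-loop filter is a linear SDE driven by the Brownian motion $\widehat{I}$, so $\widehat{x^*}$, and hence $u^*$, lies in $L^2_{\mathbb{Y}}$.

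\textbf{Step 4 (error part).} I apply It\^o's formula to $\langle\Pi_2\widetilde{x}+2\varphi_2,\widetilde{x}\rangle$ with \eqref{eq:5.7}. The sign $+Q^\top Q$ is exactly what reproduces $\widetilde{J}_2$. This gives the $\Pi_2$-terms driven by $\Sigma E^\top F^{\top,-1}$ and by $D$, together with the initial term $\mathbb{E}\langle\Pi_2(0)\widetilde{x}_0,\widetilde{x}_0\rangle$. That initial term vanishes if $\widetilde{x}_0=0$, i.e. for $\Sigma_0=0$, which the stated formula \eqref{eq:5.12} implicitly assumes, as in \eqref{eq:4.6}. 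Summing the two parts yields \eqref{eq:5.12}.

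\textbf{Main obstacle.} The delicate point is Step 2: justifying that the separation holds. Under an arbitrary $\mathbb{Y}$-adapted $u$ one must still have $\widetilde{x}$ independent of $\mathcal{Y}_t$ and $\Sigma$ deterministic, even though $\mathbb{Y}$ itself depends on $u$ through $y$. I would handle this by comparing with the $u=0$ system. The difference of the two states is $\mathbb{Y}$-adapted, as in the proof of Lemma \ref{lem:4.3}. It follows that $y$ and $y^{u=0}$ generate the same filtration, which is the standard argument of \cite{Sun2022}.
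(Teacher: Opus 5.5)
Your proposal is correct and follows essentially the paper's own route. The paper likewise uses the filter of Section~\ref{sec3} and the orthogonal splitting of $J_2$ into an $\widehat{x}$-part and a control-independent $\widetilde{x}$-part to reduce to a fully observed LQ problem, then simply invokes Theorem~3.5 of \cite{Sun2022}, whereas you carry out that last step explicitly by completing the square as in Lemma~\ref{lem:4.3} (correctly getting $\Sigma E^\top F^{\top,-1}$ where \eqref{eq:5.12} misprints $H^{\top,-1}$).

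Minor remarks: calling $\eta_2$ deterministic tacitly requires $V_0$ deterministic (as at the equilibrium of Theorem~\ref{thm:5.3}, and as the paper implicitly assumes by treating \eqref{eq:5.4} as an ODE); your filtration sketch by itself only yields $\mathcal{Y}^{u=0}_t\subseteq\mathcal{Y}_t$, and the reverse inclusion (or a suitable restriction of the admissible class) is what one genuinely takes from \cite{Sun2022}, exactly as the paper does; and the term $\mathbb{E}\langle\Pi_2(0)\widetilde{x}_0,\widetilde{x}_0\rangle$ vanishes automatically, because $\mathcal{F}_0$ is trivial, so $x_0$ is deterministic and $\widetilde{x}_0=0$.
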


\begin{proof}
	It follows that this constitutes an LQ problem under partial information. According to the results presented in Section \ref{sec3}, the filtering equation transforms the original partially observed system into an equivalent fully observed system in terms of the conditional expectation $\widehat{x}(\cdot)$. By employing the orthogonal decomposition technique, the cost functional can be decomposed into two parts corresponding to $\widehat{x}(\cdot)$ and $\widetilde{x}(\cdot)$, respectively. Then, by reformulating the problem as a standard stochastic LQ problem with complete information in terms of $\widehat{x}(\cdot)$, the desired result follows from Theorem 3.5 in \cite{Sun2022}.
\end{proof}

The following theorem reveals a sufficient condition for the existence of a closed-loop Nash equilibrium to the $H_2/H_\infty$ problem under partial information.

\begin{thm}\label{thm:5.3}
	Let Assumptions \ref{ass:2.1}-\ref{ass:2.4} hold. Suppose the Riccati equations \eqref{eq:5.1}-\eqref{eq:5.2}, \eqref{eq:5.6}-\eqref{eq:5.7} admit solutions 
	\[
	(P_1(\cdot), P_2(\cdot), \Pi_1(\cdot), \Pi_2(\cdot)) \in C([0,T];\mathbb{S}^n)^4
	\]
	with corresponding feedback gains
	\[
	(U(\cdot),U_0(\cdot)) \in \mathcal{N}^2[0,T], \quad (V(\cdot),V_0(\cdot)) \in \mathcal{M}^2[0,T]
	\]
	and ODEs \eqref{eq:5.3}-\eqref{eq:5.4}, \eqref{eq:5.8}-\eqref{eq:5.9} admit unique solutions 
	\[
	(\eta_1(\cdot), \eta_2(\cdot), \varphi_1(\cdot), \varphi_2(\cdot)) \in L_{\mathbb{Y}}^2([0,T];\mathbb{R}^n)^4.
	\]
	
	Then the $H_2/H_\infty$ control problem defined in Definition \ref{defn:2.2} admits a closed-loop Nash equilibrium. More precisely, there exists a closed-loop Nash equilibrium $(u^*(\cdot),v^*(\cdot))$ for Problem \ref{pro:2.3}, and the optimal closed-loop control laws $(u^*(\cdot),v^*(\cdot))$ are given by:
	\begin{equation*}
		\begin{aligned}
			&u^*(\cdot) = U(\cdot)\widehat{x^*}(\cdot) + U_0(\cdot),\\
			&v^*(\cdot) = V(\cdot)\widehat{x^*}(\cdot) + V_0(\cdot),
		\end{aligned}
	\end{equation*}
	here $\widehat{x^*}(\cdot)$ is the optimal estimate process corresponding to system \eqref{eq:2.1} under the control pair $(u^*(\cdot),v^*(\cdot))$, and $(U(\cdot),U_0(\cdot);V(\cdot),V_0(\cdot)) \in \mathcal{N}^2[0,T] \times \mathcal{M}^2[0,T] $ is determined by \eqref{eq:5.5}.
\end{thm}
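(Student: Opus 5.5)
We verify the two Nash inequalities \eqref{eq:2.5} and \eqref{eq:2.6} separately. In each case we freeze the other player's observation-feedback strategy from \eqref{eq:5.5}, which reduces the problem to a one-player partially observed problem already handled in Section \ref{sec4} or by Lemma \ref{lem:5.2}. The remaining work is to check that the resulting best responses coincide with the strategies prescribed by \eqref{eq:5.5}.

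\textbf{Step 1 ($H_\infty$ part).} Fix $u=U\widehat{x}+U_0$ with $U=-B_2^\top P_2$ and $U_0=-B_2^\top\eta_2$. Then the state equation takes the form \eqref{eq:3.1} with $B:=B_2U$ and $\sigma:=B_2U_0+b$. The difference from Section \ref{sec4} is that $J_1$ now contains the control-energy term $|N_1u|^2$. By Assumption \ref{ass:2.4} this equals $|U\widehat{x}+U_0|^2$, which depends only on $\widehat{x}$. Hence in the orthogonal decomposition $J_1=\widehat J_1+\widetilde J_1$ it enters only $\widehat J_1$.
\begin{itemize}
\item $\widehat J_1$ now carries the state weight $Q^\top Q+U^\top U$, the cross term $2\langle U^\top U_0,\widehat{x}\rangle$, and the constant $|U_0|^2$.
\item Apply It\^o's formula to $\langle P_1\widehat{x}+2\eta_1,\widehat{x}\rangle$ along the filter \eqref{eq:3.10}, and use \eqref{eq:5.1} in place of \eqref{eq:4.1}.
\item Completing the square in $v$, exactly as in \eqref{eq:4.8}, gives $\widehat J_1=\text{const}+\gamma^2\mathbb{E}\int_0^T|v-V\widehat{x}-V_0|^2\,dt$ with $V=-\gamma^{-2}B_1^\top P_1$.
\end{itemize}
One point needs care. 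The linear term $U^\top U_0$ must be absorbed into the $\eta_1$ equation, so \eqref{eq:5.3} has to be read, or slightly adjusted, with forcing $P_1(B_2U_0+b)-U^\top U_0$. I will check whether this term cancels. Since $U_0=-B_2^\top\eta_2$, it does not vanish in general. If it persists, I will record it as the correct forcing term and note that the argument is otherwise unchanged.

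The term $\widetilde J_1$ is handled with $\Pi_1$ and $\varphi_1$ as in \eqref{eq:4.9}, and it does not depend on $v$. Therefore $v^*=V\widehat{x^*}+V_0$ minimizes $J_1(0,x_0;U\widehat{x}+U_0,\cdot)$ over $\mathcal V^{\mathbb Y}$, which is \eqref{eq:2.5}.

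\textbf{Step 2 ($H_2$ part).} Fix $v=V\widehat{x}+V_0$ with $(V,V_0)$ from \eqref{eq:5.5}. This is precisely the setting of Lemma \ref{lem:5.2}. That lemma gives the unique optimal control $u^*=-B_2^\top P_2\widehat{x^*}-B_2^\top\eta_2=U\widehat{x^*}+U_0$, where $P_2$ and $\eta_2$ solve \eqref{eq:5.2} and \eqref{eq:5.4} with the given $V,V_0$. This is \eqref{eq:2.6}. Consistency of the coupling holds by construction: the $V$ appearing in \eqref{eq:5.2} is the same one obtained in Step 1, and the $U$ used in Step 1 is the one produced here.

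\textbf{Step 3 (well-posedness of the equilibrium).} Both closed-loop systems must produce the same state $x^*$ and estimate $\widehat{x^*}$. The filter \eqref{eq:3.10}, with both feedbacks inserted, is a linear SDE driven by the innovation $\widehat I$ with bounded continuous coefficients. It therefore has a unique solution, so $u^*\in\mathcal U^{\mathbb Y}$ and $v^*\in\mathcal V^{\mathbb Y}$. The covariance $\Sigma$ solves \eqref{eq:3.12}, which does not involve the controls, so the same $\Sigma$ and $\mathcal A$ serve both players.

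\textbf{Main obstacle.} I expect the main obstacle to be Step 1. There the extra $|N_1u|^2$ term must be absorbed correctly into the $P_1$ and $\eta_1$ equations. One must also justify that completing the square gives a global minimum even though the weighting is indefinite. The completed-square identity itself shows that $J_1-\text{const}\ge 0$ with equality exactly at $v^*$, so no definiteness assumption is needed beyond the solvability of \eqref{eq:5.1}.
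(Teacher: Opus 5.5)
Your route is the paper's: freeze the opponent's observation-feedback strategy, split $J_1=\widehat{J}_1+\widetilde{J}_1$, apply It\^o's formula to $\langle P_1\widehat{x}+2\eta_1,\widehat{x}\rangle$ along the filter, complete the square with \eqref{eq:5.1}, and take the $H_2$ half from Lemma~\ref{lem:5.2}.

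Your worry about $U^\top U_0$ is justified, and you should state it as a conclusion rather than a contingency. The paper's proof gives $\widehat{J}_1$ the state weight $Q^\top Q+U^\top U$ only. It silently drops $2\langle U^\top U_0,\widehat{x}\rangle+|U_0|^2$, which comes from $|N_1u|^2=|U\widehat{x}+U_0|^2$. Since $U^\top U_0=P_2B_2B_2^\top\eta_2$, the forcing in \eqref{eq:5.3} must be $P_1(B_2U_0+b)-U^\top U_0$, as you wrote. Also, \eqref{eq:5.15} gains the term $-\int_0^T|U_0|^2\,\mathrm{d}t$.

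With the printed \eqref{eq:5.3} and $(U,U_0)$ held fixed, your completed square shows that $J_1$ at the printed $v^*$ exceeds its minimum by $\gamma^{-2}\int_0^T|B_1^\top\delta|^2\,\mathrm{d}t$. Here $\dot\delta+(A+B_2U+B_1V)^\top\delta-U^\top U_0=0$ and $\delta(T)=0$. So \eqref{eq:2.5} fails in general. The statement as printed holds only when $U^\top U_0\equiv 0$; for instance, when $b\equiv0$ the homogeneous linear system forces $\eta_1=\eta_2\equiv0$. The corrected pair \eqref{eq:5.3}--\eqref{eq:5.4} is still a linear backward system in $(\eta_1,\eta_2)$ with bounded coefficients, hence uniquely solvable. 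What you prove is therefore the corrected theorem, and you should say so explicitly.

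The genuine omission in your proposal is condition (1) of Definition~\ref{defn:2.2}, the strict bound $\|L\|<\gamma$. The theorem's first sentence claims it, and the paper's proof handles it by invoking Lemma~\ref{lem:4.3} with $Q^\top Q$ replaced by $Q^\top Q+U^\top U$, which turns \eqref{eq:4.1} into \eqref{eq:5.1}. Your Step 1 proves only \eqref{eq:2.5}. Minimality of $J_1$ at $v^*$ only implies that the quadratic part $\gamma^2\|v\|^2-\|Lv\|^2$ of $J_1$ is nonnegative, that is, $\|L\|\le\gamma$.

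To get strictness, run your computation on the difference system $\dot{\bar x}=(A+B_2U)\bar x+B_1v$, $\bar x(0)=0$. There $b$, $U_0$ and the noise cancel, and $\widehat{\bar x}=\bar x$. Using \eqref{eq:5.1} and $N_1^\top N_1=I_s$, you get $\gamma^2\|v\|^2-\|Lv\|^2=\gamma^2\|v-V\bar x\|^2$. The map $v\mapsto v-V\bar x$ has the bounded inverse $w\mapsto w+V\bar x^w$, where $\dot{\bar x}^w=(A+B_2U+B_1V)\bar x^w+B_1w$ and $\bar x^w(0)=0$. Hence $\|v-V\bar x\|\ge c\|v\|$ for some $c>0$, and so $\|L\|\le\gamma\sqrt{1-c^2}<\gamma$. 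Alternatively, cite Lemma~\ref{lem:4.3} as the paper does.
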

\begin{proof}
	Suppose that equations \eqref{eq:5.1}-\eqref{eq:5.9} are all solvable. Substituting $u(t) = U(t)\widehat{x}(t) + U_0(t)$ into state equation \eqref{eq:2.1}, we have the following closed-loop system:
	\begin{equation*}
		\left\{
		\begin{aligned}
			\mathrm{d}x(t) =& \Big\{ A(t)x(t) + B_2(t)U(t)\widehat{x}(t) + B_1(t)v(t) \\
			& + B_2(t)U_0(t) + b(t) \Big\} \mathrm{d}t \\
			& + C(t)\mathrm{d}W(t) + D(t)\mathrm{d}\widetilde{W}(t),\\
			x(0)=&x_0.
		\end{aligned}
		\right.
	\end{equation*}
	
	Using the conclusions from Section \ref{sec3}, we can obtain the following filtering equation and the estimate error equation:
	\begin{equation}\label{eq:5.13}
		\left\{
		\begin{aligned}
			&\mathrm{d}\widehat{x}(t) = \Big\{[A(t) + B_2(t)U(t)]\widehat{x}(t) + B_1(t)v(t) \\
			&\qquad + B_2(t)U_0(t) + b(t)\Big\}\,\mathrm{d}t \\
			&\qquad + \big[\Sigma(t)E(t)^{\top}H(t)^{-1} + C(t)F(t)^{-1}\big] \\
			&\qquad\quad \cdot \big\{\mathrm{d}y(t) - \big[E(t)\widehat{x}(t) + \beta(t)\big]\mathrm{d}t\big\},\\
			&\widehat{x}(0) = \widehat{x}_0,
		\end{aligned}
		\right.
	\end{equation}
	\begin{equation}\label{eq:5.14}
		\left\{
		\begin{aligned}
			\mathrm{d}\widetilde{x}(t) =& \Big\{A(t) - \Sigma(t)E(t)^{\top}H(t)^{-1}E(t) \\
			&\quad - C(t)F(t)^{-1}E(t)\Big\}\widetilde{x}(t)\mathrm{d}t\\
			-& \Big\{\Sigma(t)E(t)^{\top}H(t)^{-1}F(t)\Big\}\mathrm{d}W(t) + D(t)\mathrm{d}\widetilde{W}(t),\\
			\widetilde{x}(0) = &\widetilde{x}_0.
		\end{aligned}
		\right.
	\end{equation}
	
	We consider measured output $z(t)$ given by
	\begin{equation*}
		\begin{pmatrix}
			Q(t)x(t)\\
			N_1(t)[U(t)\widehat{x}(t) + U_0(t)]
		\end{pmatrix}.
	\end{equation*} 
	
	According to the definition of the cost functional $J_1$ and applying the orthogonal decomposition technique, we can decompose $J_1(0,x_0;U(t)\widehat{x}(\cdot) + U_0(\cdot),v(\cdot)) = \widehat{J}_1(x_0;U(t)\widehat{x}(\cdot) + U_0(\cdot),v(\cdot)) + \widetilde{J}_1$, where
	\begin{equation*}
		\begin{aligned}
			&\widehat{J}_1(x_0;U(t)\widehat{x}(\cdot) + U_0(\cdot),v(\cdot)) = \mathbb{E}\int_0^T \Big[\langle\gamma^2v(t),v(t)\rangle \\
			&\qquad \qquad- \langle[Q(t)^{\top}Q(t) + U(t)^{\top}U(t)]\widehat{x}(t),\widehat{x}(t)\rangle\Big]\mathrm{d}t,\\
			&\widetilde{J}_1 = -\mathbb{E}\int_0^T \langle Q(t)^{\top}Q(t)\widetilde{x}(t),\widetilde{x}(t)\rangle \mathrm{d}t.
		\end{aligned}
	\end{equation*}
	
	By using It\^{o}'s formula to $\langle P_1(t)\widehat{x}(t) + 2\eta_1(t),\widehat{x}(t)\rangle$ and $\langle \Pi_1(t)\widetilde{x}(t) + 2\varphi_1(t),\widetilde{x}(t)\rangle$, then simplify using \eqref{eq:5.1}, \eqref{eq:5.6} and completing the square, we obtain
	\begin{equation}\label{eq:5.15}
		\begin{aligned}
			J_1(0,&x_0;U(t)\widehat{x}(\cdot) + U_0(\cdot),v(\cdot)) \\
			&= \mathbb{E} \langle P_1(0)\widehat{x}_0,\widehat{x}_0 \rangle + 2\mathbb{E} \langle \eta_1(0),\widehat{x}_0 \rangle\\
			&+ \mathbb{E} \int_0^T \Big\{ 2\langle \eta_1(t), B_2(t)U_0(t) + b(t) \rangle \\
			& - \langle \gamma^{-2} B_1(t)^\top \eta_1(t),B_1(t)^\top \eta_1(t) \rangle\\
			& + \langle \gamma^2\{v(t) + \gamma^{-2}B_1(t)^{\top}[P_1(t)\widehat{x}(t) + \eta_1(t)]\},\\
			&\quad\{v(t) + \gamma^{-2}B_1(t)^{\top}[P_1(t)\widehat{x}(t) + \eta_1(t)]\} \rangle\\
			&+ \langle P_1(t)[\Sigma(t) E(t)^\top H(t)^{\top,-1} + C(t)],\\
			&\quad [\Sigma(t)E(t)^\top H(t)^{\top,-1} + C(t)] \rangle\\
			& + \langle \Pi_1(t)[\Sigma(t)E(t)^{\top}H(t)^{\top,-1}],\Sigma(t)E(t)^{\top}H(t)^{\top,-1} \rangle \\
			& + \langle \Pi_1(t)D(t),D(t) \rangle \Big\} \mathrm{d}t.
		\end{aligned}
	\end{equation}
	
	Since the coefficient of the quadratic term associated with the disturbance is positive, taking $v(t) = v^*(t) = V(t)\widehat{x^*}(t) + V_0(t) = -\gamma^{-2}B_1(t)^{\top}[P_1(t)\widehat{x^*}(t) + \eta_1(t)]$ and $u(t) = u^*(t) = U(t)\widehat{x^*}(t) + U_0(t)$, it follows immediately that for any $v(\cdot) \in \mathcal{V}^{\mathbb{Y}}$,
	\begin{equation}\label{eq:5.16}
		\begin{aligned}
			J_1(0,x_0;&u^*(\cdot),v^*(\cdot)) \\
			&= J_1(0,x_0;U(t)\widehat{x^*}(\cdot) + U_0(\cdot),V(\cdot)\widehat{x^*}(\cdot) + V_0(\cdot))\\
			&\leq J_1(0,x_0;U(\cdot)\widehat{x}(\cdot) + U_0(\cdot),v(\cdot)).
		\end{aligned}
	\end{equation}
	
	That is, $v^*(\cdot) = V(\cdot)\widehat{x^*}(\cdot) + V_0(\cdot)$ is the worst-case disturbance that minimizes cost functional $J_1(0,x_0;U(\cdot)\widehat{x}(\cdot) + U_0(\cdot),v(\cdot))$. Moreover, letting $x(0) = 0, b(\cdot) = C(\cdot) = D(\cdot) = 0$ and using Lemma \ref{lem:4.3}, the robustness condition $\|\mathcal{L}\| < \gamma$ holds.
	
	Substituting $v(t) = V(t)\widehat{x}(t) + V_0(t)$ into the state equation and considering the minimization of the cost functional $J_2(0,x_0;u(\cdot),V(\cdot)\widehat{x}(\cdot) + V_0(\cdot))$, we observe that this in fact constitutes a partially observed LQ problem. Taking $u(t) = u^*(t) = U(t)\widehat{x^*}(t) + U_0(t) = -B_2^{\top}(t)[P_2(t)\widehat{x^*}(t) + \eta_2(t)]$ and $v(t) = v^*(t) = V(t)\widehat{x^*}(t) + V_0(t)$, then for any $u(\cdot) \in \mathcal{U}^{\mathbb{Y}}$, we can easily obtain 
	\begin{equation}\label{eq:5.17}
		\begin{aligned}
			J_2(0,x_0;&u^*(\cdot),v^*(\cdot)) \\
			&= J_2(0,x_0;U(\cdot)\widehat{x^*}(\cdot) + U_0(\cdot),V(\cdot)\widehat{x^*}(\cdot) + V_0(\cdot))\\
			&\leq J_2(0,x_0;u(\cdot),V(\cdot)\widehat{x}(\cdot) + V_0(\cdot)).
		\end{aligned}
	\end{equation}
	by Lemma \ref{lem:5.2}. That is, $u^*(\cdot) = U(\cdot)\widehat{x^*}(\cdot) + U_0(\cdot)$ is the optimal control when the worst-case disturbance is imposed on system \eqref{eq:2.1}. The existence of the Nash equilibrium $(u^*(\cdot), v^*(\cdot))$ is an immediate result by combining \eqref{eq:5.16} and \eqref{eq:5.17}, and the closed-loop Nash equilibrium strategy $(U(\cdot),U_0(\cdot);V(\cdot),V_0(\cdot))$ clearly satisfies \eqref{eq:5.5}.
\end{proof}

Next, we state a theorem that provides a necessary condition for the solvability of the stochastic $H_2/H_\infty$ control problem with partial information in terms of the existence of solutions to the associated Riccati equations.

\begin{thm}\label{thm:5.4}
	Suppose that the $H_2/H_\infty$ control problem with partial observation (as defined in Definition \ref{defn:2.2}) admits a solution $(u^*(\cdot),v^*(\cdot))$ with observation-feedback form in \eqref{eq:2.3}, then the cross-coupled Riccati equations \eqref{eq:5.1}-\eqref{eq:5.2} admit a solution $(P_1(\cdot),P_2(\cdot)) \in C([0,T];\mathbb{S}^n)^2$.
\end{thm}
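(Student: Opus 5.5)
Given a solution $(u^*,v^*)$ in the observation-feedback form \eqref{eq:2.3} with strategies $(U,U_0;V,V_0)\in\mathcal{N}^2[0,T]\times\mathcal{M}^2[0,T]$, I would treat the two halves of Definition \ref{defn:2.2} separately. First I obtain $P_1$ from the $H_\infty$ robustness condition via Lemma \ref{lem:4.3}. Then I obtain $P_2$ from the $H_2$ optimality condition via Lemma \ref{lem:5.2}. Finally I check that the gains produced this way coincide with the given $U$ and $V$, which makes \eqref{eq:5.1}--\eqref{eq:5.2} genuinely cross-coupled.

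\textbf{Step 1 ($P_1$).} Fix the control $u=U\widehat{x}+U_0$ and set $B:=B_2U$, $\sigma:=B_2U_0+b$, so the state equation takes the form \eqref{eq:3.1}. The perturbation $\bar x=x^{x_0,U\widehat x,v}-x^{x_0,0,0}$ is $\mathbb{Y}$-adapted, exactly as in the proof of Lemma \ref{lem:4.3}. Hence $\widehat{\bar x}=\bar x$ and it solves $\dot{\bar x}=(A+B_2U)\bar x+B_1v$ with $\bar x(0)=0$. The regulated output is $\tilde z=(Q\bar x,\,N_1U\bar x)$, and Assumption \ref{ass:2.4} gives $|\tilde z|^2=\bar x^\top(Q^\top Q+U^\top U)\bar x$. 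So condition (1), $\|L\|<\gamma$, is a deterministic-type $H_\infty$ bound with output weight $Q^\top Q+U^\top U$ in place of $Q^\top Q$. Repeating the argument of Lemma \ref{lem:4.3}, i.e.\ Lemma 2.2 of \cite{Limebeer1994}, with this weight yields a solution $P_1\in C([0,T];\mathbb{S}^n)$ of \eqref{eq:5.1}. The completion of squares \eqref{eq:5.15} then shows that the unique minimizer of $J_1(0,x_0;U\widehat x+U_0,\cdot)$ is $-\gamma^{-2}B_1^\top(P_1\widehat x+\eta_1)$. Uniqueness holds because the quadratic term in $v$ has coefficient $\gamma^2>0$.

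\textbf{Step 2 (identifying $V$).} By \eqref{eq:2.5}, $v^*$ is a minimizer of $J_1(0,x_0;U\widehat x+U_0,\cdot)$, so by the uniqueness in Step 1,
\[V\widehat{x^*}+V_0=-\gamma^{-2}B_1^\top(P_1\widehat{x^*}+\eta_1)\quad\text{a.e., a.s.}\]
To separate the gain from the affine part, I use that this holds for every initial state $x_0$. Varying $\widehat{x}_0$ over a spanning set of deterministic vectors, and using that $\widehat{x^*}(t)$ has nondegenerate spread (its dependence on $\widehat x_0$ is through an invertible transition matrix), gives $V=-\gamma^{-2}B_1^\top P_1$ and $V_0=-\gamma^{-2}B_1^\top\eta_1$. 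Continuity of $V$ and $P_1$ upgrades the a.e.\ identity to all $t$.

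\textbf{Step 3 ($P_2$ and $U$).} With $v=V\widehat x+V_0$ fixed, Lemma \ref{lem:5.2} provides $P_2$ solving \eqref{eq:5.2} with this $V$, which is now $-\gamma^{-2}B_1^\top P_1$. It also provides the unique optimal control $-B_2^\top(P_2\widehat x+\eta_2)$. Inequality \eqref{eq:2.6} says $u^*$ is optimal, so uniqueness and the same initial-state variation as in Step 2 give $U=-B_2^\top P_2$. Substituting this into the equation from Step 1 makes it exactly \eqref{eq:5.1}, so $(P_1,P_2)$ solves the coupled system.

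The main obstacle is Step 1. The \cite{Limebeer1994} result must be applied to a modified output, and one must justify that the strict bound $\|L\|<\gamma$, taken over the $\mathbb{Y}$-adapted disturbances $\mathcal{V}^{\mathbb{Y}}$ rather than deterministic ones, still forces the Riccati solution not to blow up on $[0,T]$. I would handle this as in Lemma \ref{lem:4.3}. Since the reduced dynamics are linear with deterministic coefficients, the supremum over $\mathcal{V}^{\mathbb{Y}}$ equals the supremum over deterministic $L^2$ disturbances: deterministic disturbances lie in $\mathcal{V}^{\mathbb{Y}}$, and conversely an adapted $v$ can be handled path-by-path.
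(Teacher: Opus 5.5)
Your proposal follows the paper's proof: $P_1$ and the gain $V$ come from the bounded real lemma (Lemma \ref{lem:4.3}) applied to the closed loop under $u=U\widehat{x}+U_0$, and then $P_2$ and $U$ come from the partially observed LQ result (Lemma \ref{lem:5.2}) under $v=V\widehat{x}+V_0$. You also justify two steps the paper only asserts, namely the weight $Q^\top Q+U^\top U$ (via $\widehat{\bar x}=\bar x$ and $N_1^\top N_1=I_s$) and the separation of the gains $V,U$ from the affine parts (via uniqueness of the optimizer and variation of a deterministic initial state); that second step genuinely needs the strategies to be independent of $x_0$, as in the ``for any initial state'' clause of Problem \ref{pro:2.3}, because for a single $x_0$ the identity $V\widehat{x^*}+V_0=-\gamma^{-2}B_1^\top(P_1\widehat{x^*}+\eta_1)$ does not determine $V$ when $V_0$ may be any process in $\mathcal{V}^{\mathbb{Y}}$.
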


\begin{proof}
	On the one hand, assume the $H_2/H_\infty$ problem with partial observation is solvable, then condition (1) in Definition \ref{defn:2.2} holds. It then follows from Lemma \ref{lem:4.3} that the following Riccati equation possesses a unique solution $P_1 \in C([0,T];\mathbb{S}^n)$:
	\begin{equation*}
		\left\{
		\begin{aligned}
			&\dot{P}_1 + P_1(A + B_2U) + (A + B_2U)^{\top}P_1 - Q^{\top}Q - U^{\top}U \\
			&\qquad\qquad\qquad\qquad\qquad\qquad - \gamma^{-2}P_1 B_1 B_1^{\top} P_1 = 0,\\
			&P_1(T) = 0.
		\end{aligned}
		\right.
	\end{equation*}
	
	Furthermore, the worst-case disturbance is given by 
	\begin{equation*}
		v^*(t) = -\gamma^{-2}B_1(t)^{\top}[P_1(t)\widehat{x^*}(t) + \eta_1(t)],
	\end{equation*}
	where $\eta_1 \in C([0,T];\mathbb{R}^n)$ is the solution to ODE:
	\begin{equation*}
		\left\{
		\begin{aligned}
			&\dot{\eta}_1 + [A + B_2U - \gamma^{-2}B_1B_1^{\top}P_1]^{\top}\eta_1 + P_1(B_2U_0 + b) = 0,\\
			&\eta_1(T) = 0.
		\end{aligned}
		\right.
	\end{equation*}
	
	Hence, $(V,V_0)$ is determined by
	\begin{equation*}
		(V,V_0) = (-\gamma^{-2}B_1^{\top}P_1,-\gamma^{-2}B_1^{\top}\eta_1) \in \mathcal{M}^2[0,T].
	\end{equation*}
	
	On the other hand, we consider the worst-case disturbance $v^* = V\widehat{x^*} + V_0$ imposed on system \eqref{eq:2.1}. By Lemma \ref{lem:5.2}, the next two equations admit unique solutions $P_2 \in C([0,T];\mathbb{S}^n)$ and $\eta_2 \in C([0,T];\mathbb{R}^n)$:
	\begin{equation*}
		\left\{
		\begin{aligned}
			&\dot{P}_2 + P_2(A + B_1V) + (A + B_1V)^{\top}P_2 + Q^{\top}Q \\
			&\qquad\qquad\qquad\qquad\qquad - P_2 B_2 B_2^{\top} P_2 = 0,\\
			&P_2(T) = 0.
		\end{aligned}
		\right.
	\end{equation*}
	\begin{equation*}
		\left\{
		\begin{aligned}
			&\dot{\eta}_2 + [A + B_1V - B_2B_2^{\top}P_2]^{\top}\eta_2 + P_2(B_1V_0 + b) = 0,\\
			&\eta_2(T) = 0.
		\end{aligned}
		\right.
	\end{equation*}
	and the corresponding optimal observation-feedback control is
	\begin{equation*}
		u^*(t) = -B_2(t)^{\top}[P_2(t)\widehat{x^*}(t) + \eta_2(t)].
	\end{equation*}
	
	It is clear that $(U,U_0)$ is determined by
	\begin{equation*}
		(U,U_0) = (-B_2^{\top}P_2,-B_2^{\top}\eta_2) \in \mathcal{N}^2[0,T].
	\end{equation*}
	
	Consequently, the cross-coupled Riccati equations \eqref{eq:5.1}-\eqref{eq:5.2} admit solutions with coupling terms $U = -B_2^{\top}P_2, V = -\gamma^{-2}B_1^{\top}P_1$.
\end{proof}

\section{An UAV Numerical Example}\label{sec6}
Referring to References \cite{Qi2025,Hui2024,Hasanlu2025,Ming2023,Shi2018}, this section applies the proposed partial observation stochastic $H_{2}/H_{\infty}$ control framework to the longitudinal flight control of a quadrotor unmanned aerial vehicle (UAV). The objective is to validate the effectiveness of the observer-based feedback controller in dealing with system biases and suppressing the worst-case disturbance derived from differential game theory.

\subsection{System Modeling}
We consider the linearized longitudinal dynamic model of a quadrotor UAV. The physical parameters are adopted from \cite{Shi2018}: mass $m=2.0$ kg, gravitational acceleration $g=9.81$ m/s$^2$, arm length $l=0.2$ m, and the moment of inertia along the y-axis $I_y=0.005$ kg$\cdot$m$^2$.

The state vector is defined as $x(t) = [z(t), \dot{z}(t), \theta(t), \dot{\theta}(t)]^\top$, representing the vertical position, vertical velocity, pitch angle, and pitch rate, respectively. The control input $u(t) = [U_1(t), U_2(t)]^\top$ consists of the normalized total thrust and pitch torque. The system dynamics are described by the stochastic differential equation:
\begin{equation*}
	\mathrm{d}x(t) = [A x(t) + B_1 v(t) + B_2 u(t) + b(t)] \mathrm{d}t + C \mathrm{d}W(t).
\end{equation*}

The system matrices are given by:
\begin{equation*}
	A = \begin{bmatrix} 
		0 & 1 & 0 & 0 \\
		0 & 0 & 0 & 0 \\
		0 & 0 & 0 & 1 \\
		0 & 0 & 0 & 0 
	\end{bmatrix}, 
	B_2 = \begin{bmatrix} 
		0 & 0 \\
		1/m & 0 \\
		0 & 0 \\
		0 & 1/I_y
	\end{bmatrix}, 
	B_1 = \begin{bmatrix} 
		0 & 0 \\
		0.1 & 0 \\
		0 & 0 \\
		0 & 0.1 
	\end{bmatrix}.
\end{equation*}

The affine term $b(t) = [0, -0.05, 0, 0.02]^\top$ represents a constant model mismatch, such as unmodeled gravity offset and aerodynamic trim imbalance.

To rigorously model the stochastic environment, we assume the system and the observation are driven by a standard 2-dimensional Brownian motion $W(t)$. The noise intensity matrix $C$ is chosen as:
\begin{equation*}
	C = 0.01 \times \begin{bmatrix} 
		1 & 0 \\ 1 & 0 \\ 0 & 1 \\ 0 & 1 
	\end{bmatrix}.
\end{equation*}

We consider an observation equation incorporating a sensor bias term $\beta(t)$:
\begin{equation*}
	\mathrm{d}y(t) = [E x(t) + \beta(t)] \mathrm{d}t + F \mathrm{d}W(t),
\end{equation*}
where the controller has access only to the position $z$ and pitch angle $\theta$ measurements:
\begin{equation*}
	E = \begin{bmatrix} 
		1 & 0 & 0 & 0 \\
		0 & 0 & 1 & 0 
	\end{bmatrix}, \quad
	F = 0.05 I_2.
\end{equation*}
The affine term $\beta(t) = [0.02, 0.01]^\top$ simulates a constant bias in the sensors.

\subsection{Controller Design}

The design objective is to satisfy the $H_\infty$ disturbance attenuation level $\gamma = 0.8$ while minimizing the $H_2$ performance. The weighting matrices for the cost functional are chosen as $Q = \text{diag}(1, 0.1, 1, 0.1)$ and $N_1 = I_2$.

According to Theorem \ref{thm:5.3}, the optimal control strategy is $u^*(t) = -B_2^\top P_2(t) \widehat{x^*}(t) - B_2^{\top}\eta_2(t)$. Simultaneously, to verify the robustness, we apply the worst-case disturbance strategy derived from the Nash equilibrium of the differential game:
\begin{equation*}
	v^*(t) = -\gamma^{-2} B_1^\top P_1(t) \widehat{x^*}(t) - \gamma^{-2}B_1^{\top}\eta_1(t).
\end{equation*}

This strategy represents the most aggressive adversarial attack attempting to maximize the performance cost. The state estimate $\widehat{x}(t)$ is generated by the filter:
\begin{equation*}
	\begin{aligned}
		\mathrm{d}\widehat{x}(t) = &[A\widehat{x}(t) + B_1 v^*(t) + B_2 u^*(t) + b(t)]\mathrm{d}t \\
		&+ K(t)\{\mathrm{d}y(t) - [E\widehat{x}(t) + \beta(t)]\mathrm{d}t\},
	\end{aligned}
\end{equation*}
where $K(t) = \Sigma(t)E^{\top}H^{-1} + CF^{-1}$.

\subsection{Simulation Results}

The simulation is conducted over a time horizon $T=20\,\mathrm{s}$ with a time step of $0.01\,\mathrm{s}$. The initial state is set to $x_0 = [0.5, 0, 0.1, 0]^\top$, representing a deviation from the hover equilibrium.

\begin{figure}[htbp]
	\centering
	\includegraphics[width=\linewidth]{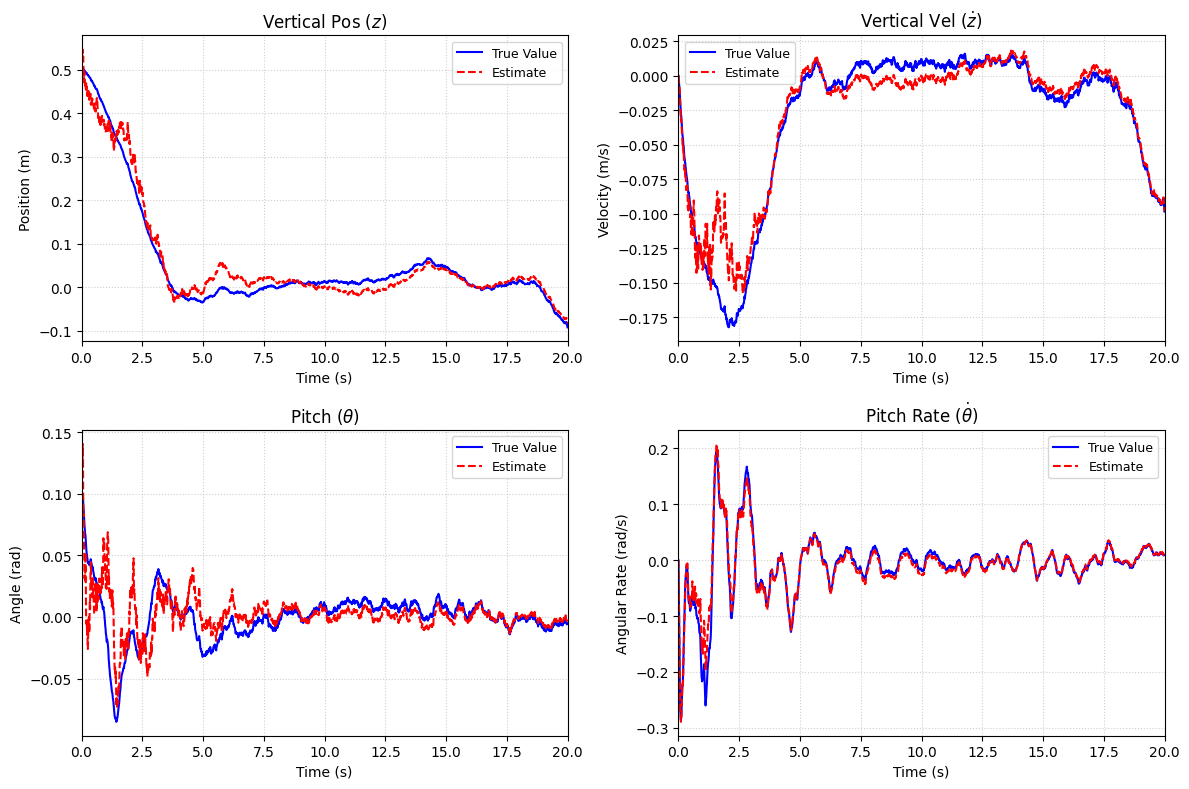}
	\caption{Trajectories of true states $x(t)$ (blue solid lines) and estimated states $\widehat{x}(t)$ (red dashed lines) under the proposed controller and disturbance.}
	\label{fig:uav_worst_case}
\end{figure}

The simulation results are presented in Fig. \ref{fig:uav_worst_case}. The blue solid lines represent the true system states, while the red dashed lines depict the estimated states generated by the filter. It is observed that the state $\dot{z}$ exhibits deviations near the end of the simulation period ($t \approx 20\,\mathrm{s}$). This is a typical terminal effect in finite-horizon optimal control problems. Since the terminal cost is set to zero ($P(T)=0$), the control gains decay to zero as $t \to T$, reducing the control authority against the persistent non-homogeneous term $b(t)$.

Based on the simulation results, the effectiveness of the controller designed in this paper is demonstrated in the following two aspects.

\begin{itemize}
	\item \textbf{Robustness against Worst-Case disturbance:} As shown in Fig. \ref{fig:uav_worst_case}, when $t=1\,\mathrm{s}$, due to the worst-case disturbance $v^*(t)$, the system state trajectory (such as vertical velocity $\dot{z}$) has obvious mutation and deviation. However, thanks to the immediate response of controller $u^*(t)$, this deviation was quickly suppressed in a small bounded range and did not diverge. When the disturbance is removed for $t>5\,\mathrm{s}$, the system state quickly converges to the equilibrium point under the adjustment of the controller, which demonstrates the robust stability and anti-interference ability of the closed-loop system under partial observation. 
	\item \textbf{Estimation Performance:} Fig. \ref{fig:uav_worst_case} illustrates the state estimation performance of the closed-loop system, where the blue solid lines represent the true states and the red dashed lines depict the estimated states. It can be observed that, despite the presence of stochastic noises $W(t)$ and time-varying sensor bias $\beta(t)$, the filter achieves precise tracking of the system states. Particularly for the unmeasurable channels: vertical velocity $\dot{z}$ and pitch rate $\dot{\theta}$, the estimates converge rapidly to the true values after a short transient period. This demonstrates that the filter not only effectively filters out measurement noise but also successfully compensates for the systematic offsets induced by $b(t)$ and $\beta(t)$, thereby validating the integrity of the proposed control strategy under partial observation.
\end{itemize}

To quantitatively evaluate the $H_\infty$ performance of the closed-loop system, we computed the energy gain ratio from condition (1) in Definition \ref{defn:2.2}. The control input \( u(t) \) adopts a feedback control law based on the estimated state \( \widehat{x}(t) \):
\(
u(t) = -B_2^{\top} P_2(t) \widehat{x}(t) - B_2^{\top}\eta_2(t).
\)
To rigorously test the system's robustness, the simulation constructs a piecewise worst-case disturbance signal $v(t)$ designed to deteriorate the system's performance. During the initial triggering phase ($t \in [0, 1)$), a constant step disturbance $v(t) = [1.0, 1.0]^\top$ is applied to force the system away from its equilibrium state, thereby exciting its transient response. Subsequently, in the worst-case phase ($t \in [1, 5)$), a disturbance strategy derived from $H_\infty$ differential game theory is implemented to maximize the system's deterioration, which is formulated as
\(v(t) = -\gamma^{-2} B_1^\top P_1(t) \widehat{x}(t) -\gamma^{-2}B_1^{\top}\eta_1(t)\).
Finally, during the free evolution phase ($t \ge 5$), the external disturbance is removed ($v(t) = 0$) to observe the asymptotic stabilization process of the control system in coordination with the estimator. Numerical integration of the simulation data yields a ratio of:
\begin{equation*}
	\|L\| = \frac{\|\widetilde{z}(t)\|_{[0,T]}}{\|v^*(t)\|_{[0,T]}} = 0.366 < \gamma = 0.8,
\end{equation*}
this value is strictly less than the prescribed disturbance attenuation level. This result numerically validates that the closed-loop system satisfies the condition $\left\|L\right\| < \gamma$, thereby confirming the dissipativity of the system in the energetic sense.

In conclusion, the simulation results show that the proposed observer-based $H_2/H_\infty$ control strategy not only solves the problem of information loss caused by partial observations, but also effectively resists the worst-case disturbance and system deviation, and verifies the applicability of the theoretical derivation in the actual flight control scene.

\section{Conclusion}\label{sec7}
This paper addressed the mixed $H_{2}/H_{\infty}$ control problem for linear stochastic systems under partial observation. By formulating a nonzero-sum stochastic differential game, we derived a unified closed-loop solution. Key contributions include establishing a Stochastic Bounded Real Lemma via orthogonal decomposition and proving that the existence of a closed-loop Nash equilibrium is closely related to the solvability of cross-coupled Riccati differential equations (CRDEs). A ``separation-like'' principle was obtained, expressing the optimal control and worst-case disturbance as linear feedback of the state estimate. Numerical simulations of a quadrotor UAV validated the controller's effectiveness in disturbance suppression and cost minimization. Future work may extend this framework to infinite horizon or mean-field systems.


 




\vfill

\end{document}